\newcommand{\N}{\mathbb{N}}
\newcommand{\R}{\mathbb{R}}
\newcommand{\loc}{\mathrm{loc}}
\def\e{\mathrm{e}}
\def\supp{\operatorname{supp}}
\numberwithin{equation}{section}
\DeclareFontFamily{U}{mathx}{\hyphenchar\font45}
\DeclareFontShape{U}{mathx}{m}{n}{
      <5> <6> <7> <8> <9> <10>
      <10.95> <12> <14.4> <17.28> <20.74> <24.88>
      mathx10
      }{}
\DeclareSymbolFont{mathx}{U}{mathx}{m}{n}
\DeclareMathAccent{\widecheck}{0}{mathx}{"71}
\DeclareMathAccent{\wideparen}{0}{mathx}{"75}
\newcommand{\leqnomode}{\tagsleft@true}
\newcommand{\reqnomode}{\tagsleft@false}
\newcommand{\dd}{{d}}
\newtheorem{theorem}{\sc \textbf{Theorem}}[section]  
\newtheorem{proposition}[theorem]{\sc \textbf{Proposition}}   
\newtheorem{lemma}[theorem]{\sc \textbf{Lemma}}                
\newcommand{\As}{\mathscr{A}}
\newcommand{\Rs}{\mathscr{R}}
\newcommand{\glob}{\mathrm{glob}}
\theoremstyle{remark}
\title[Riesz Transforms for the inverse Gauss measure]{On the Riesz Transforms for the inverse Gauss measure}
\author[T.\ Bruno]{Tommaso Bruno}
\address{Dipartimento di Scienze Matematiche ``Giuseppe Luigi Lagrange'',
  Politecnico di Torino, Corso Duca degli Abruzzi 24, 10129 Torino,
  Italy}
  \curraddr{Department of Mathematics: Analysis, Logic and Discrete Mathematics\\ Ghent University\\
  Krijgslaan 281\\ 9000 Ghent\\ Belgium}
\email{tommaso.bruno@ugent.be}
\author[P.\ Sj\"ogren]{Peter Sj\"ogren}
\address{Mathematical Sciences,  University of Gothenburg and  Mathematical Sciences\\ Chalmers University of Technology  \\ SE - 412 96 G\"oteborg, Sweden}
\email{peters@chalmers.se}
\keywords{Inverse Gauss measure, Riesz transforms, weak type $(1,1)$}
\thanks{{\em Math Subject Classification} 42B20, 47B03 \\
T.\ Bruno was partially supported by the Research Foundation -- Flanders (FWO) through the postdoctoral grant 12ZW120N. P.\ Sj\"ogren is grateful to the Universit\`a di Genova and the Politecnico di Torino for supporting visits which made this work possible.} 
\begin{document}
\begin{abstract}
Let $\gamma_{-1}$ be the absolutely continuous measure on $\R^n$ whose density is the reciprocal of a Gaussian function. Let further $\As$ be the natural self-adjoint Laplacian on $L^2(\gamma_{-1})$. In this paper, we prove that the Riesz transforms associated with $\As$ of order one or two are of weak type $(1,1)$, but that those of higher order are not.
\end{abstract}
\maketitle
\section{Introduction}
The Euclidean space endowed with the measure $\gamma_{-1}$ whose density is the reciprocal of a Gaussian, which we call the inverse Gauss measure, is a toy model of a variety of settings where a theory of singular integral operators has not yet been established.  Therefore, the analysis of this model may provide profitable insights into more general frameworks. In this paper, we focus on the boundedness of the Riesz transforms. 

As a weighted manifold, $(\R^n,\gamma_{-1})$ has constant, negative definite Bakry--\'Emery curvature tensor. By a celebrated result of Bakry on weighted Riemannian manifolds~\cite{Bakry}, the negative lower bound of this tensor governs the $L^p$ boundedness, $1<p<\infty$, of the shifted Riesz transforms associated with the natural weighted Laplacian on the manifold. No general endpoint analogue is known. In addition to this, the natural weighted Laplacian $\As$ on $(\R^n, \gamma_{-1})$ can be seen as a restriction of the Laplace--Beltrami operator on a warped-product manifold whose Ricci tensor is unbounded from below. Not much is known about Riesz transforms on manifolds of unbounded geometry; see for instance~\cite{ACDH, CD} and references therein. We also emphasize that the inverse Gauss measure setting is intimately related to the Gaussian setting, for $\As$ is unitarily equivalent with a translate of the Ornstein--Uhlenbeck operator.

\smallskip

The connection with the Gaussian setting was the principal motivation for which $\gamma_{-1}$ and $\As$ were initially introduced and studied by Salogni~\cite{Salogni}. Then, several endpoint results for imaginary powers and Riesz transforms of $\As$, involving also new spaces of Hardy type, were obtained in~\cite{Bruno} by the first author of the present paper. There, it was proved that if $\lambda \geq 1$, then the shifted first-order Riesz transforms $\nabla (\As+\lambda I)^{-1/2}$ are of weak type $(1,1)$, that is, bounded from $L^1(\gamma_{-1})$ to $L^{1,\infty}(\gamma_{-1})$. This result, and those concerning Hardy spaces, strongly resemble endpoint analogues of the theorem of Bakry mentioned above. However, the problem whether such a shift is indeed necessary was left open.

The aim of this paper is to study the weak type $(1,1)$ of the Riesz transforms of any order of $\As$. In particular, we improve the first-order results by eliminating the shift. This was unexpected, because the shift {is} necessary for the boundedness from Hardy spaces adapted to $\gamma_{-1}$ to $L^1(\gamma_{-1})$, as proved in~\cite{Bruno}. Another surprising fact is that the weak type $(1,1)$ behaviour of the Riesz transforms of $\As$ is completely analogous to that of the Riesz transforms of the Ornstein--Uhlenbeck operator, although the behaviour on corresponding Hardy spaces is different. See~\cite{GMST, PerSor} and~\cite{Bruno, BrunoOU}, respectively.

To be more explicit, let $\alpha=(\alpha_1,\dots, \alpha_n) \in \N^n$ be a nonzero multi-index and let $\Rs_\alpha= \partial^\alpha \As^{-|\alpha|/2}$, which is a Riesz transform of order $|\alpha| = \alpha_1 + \dots + \alpha_n$. In this paper, we prove the following.

\begin{theorem} \label{main_theorem}
Let $\alpha \in \N^n$ be nonzero. Then $\Rs_\alpha$ is bounded from $L^1(\gamma_{-1})$ to $L^{1,\infty}(\gamma_{-1})$ if and only if $|\alpha|\leq 2$.
\end{theorem}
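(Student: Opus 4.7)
The plan is to work directly with the kernel of $\Rs_\alpha$, using the subordination formula
\[
\Rs_\alpha = \partial^\alpha \As^{-|\alpha|/2} = \frac{1}{\Gamma(|\alpha|/2)}\int_0^\infty t^{|\alpha|/2-1}\,\partial^\alpha e^{-t\As}\,dt,
\]
together with the explicit Mehler-type expression for $e^{-t\As}$ that comes from Salogni's unitary equivalence between $\As$ and a shifted Ornstein--Uhlenbeck operator $\cL+c$ with $c>0$. This positive spectral shift gives $e^{-t\As}$ an extra factor $e^{-ct}$ for large $t$, which is precisely what will let us drop the artificial shift $\lambda$ used in \cite{Bruno}. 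Throughout, the setting will be parallel to the Ornstein--Uhlenbeck analysis of \cite{GMST,PerSor}, the main novelty being to replace the projection onto the constants (which handles the zero mode for $\cL$) by a genuine pointwise gain coming from the spectral gap of $\As$.

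For the positive direction $|\alpha|\le 2$, I would split the integral kernel $K_\alpha(x,y)$ of $\Rs_\alpha$ into a local part, supported where $(x,y)$ lies in an admissible set of the Mauceri--Meda type, and a global part supported on its complement. The local part can be treated by standard Calder\'on--Zygmund theory, since $\gamma_{-1}$ is locally doubling with Gaussian density and the singularities of $K_\alpha$ at the diagonal are the usual Euclidean ones of order $|\alpha|-n$ improved by the heat-kernel smoothing. The global part is where the real work lies: after computing $\partial^\alpha$ of the Mehler-type kernel, one obtains a polynomial in $x,y$ times a Gaussian factor whose sign of the quadratic form in the exponent degenerates as $t\to\infty$. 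The exponential gain $e^{-ct}$ furnished by the shift is what makes the $t$-integral convergent and the resulting kernel absolutely integrable in $y$ against $d\gamma_{-1}(y)$ for each fixed $x$, yielding even an $L^\infty$ estimate of the global piece acting on $L^1(\gamma_{-1})$. For $|\alpha|=1$ this argument should recover and sharpen \cite{Bruno}; for $|\alpha|=2$ one has to control one additional derivative of the Mehler kernel, which introduces a polynomial factor of degree two, still absorbed by the Gaussian.

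For the negative direction $|\alpha|\ge 3$, I would adapt the counterexample of Garc\'ia-Cuerva, Mauceri, Sj\"ogren and Torrea for the Ornstein--Uhlenbeck Riesz transforms. The idea is to test $\Rs_\alpha$ on a sequence $f_N=\chi_{B_N}/\gamma_{-1}(B_N)$ of unit-norm indicators of small balls $B_N$ centred at points $x_N$ chosen to exploit the polynomial prefactor of degree $|\alpha|$ appearing after differentiating the Mehler kernel. A direct pointwise estimate from below on a comparable ball should then show that the distribution function of $\Rs_\alpha f_N$ with respect to $\gamma_{-1}$ decays strictly slower than $c/\lambda$ at the relevant level $\lambda$, contradicting weak type $(1,1)$. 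The decisive fact is that starting from order three the polynomial factor overwhelms the Gaussian in the regime near the support of $f_N$, and this qualitative feature is preserved by the unitary equivalence with shifted OU.

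The main obstacle will be the global-part analysis in the positive direction for $|\alpha|=2$ without any shift in the resolvent: differentiating twice the Mehler kernel yields terms of mixed sign whose individual estimates are too large, and the argument will hinge on combining the spectral lower bound $c>0$ with a careful cancellation between these terms when integrated against $d\gamma_{-1}(y)$. A secondary issue, requiring care, is the fact that $L^p$ norms are not preserved by the unitary equivalence away from $p=2$, so the transfer from the OU theorems is not automatic and one has to redo the weak $(1,1)$ estimates in the $\gamma_{-1}$ setting even when the heat kernels are formally related.
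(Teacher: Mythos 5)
Your overall skeleton (local/global splitting, Calder\'on--Zygmund treatment of the local part for all orders, and a GMST-style counterexample for $|\alpha|\ge 3$) matches the paper, and the outline of the negative direction is essentially the argument actually used: one tests on a unit mass near $z=(\eta,\dots,\eta)$ and bounds the kernel from below on a tube where $|x|\approx \frac32|z|$, getting a lower bound $\gtrsim \eta^{|\alpha|-2}$ for the weak quasi-norm. The fatal problem is your treatment of the global part. You claim that after differentiating the Mehler kernel the global piece is ``absolutely integrable in $y$ against $d\gamma_{-1}(y)$ for each fixed $x$, yielding even an $L^\infty$ estimate of the global piece acting on $L^1(\gamma_{-1})$.'' Even if such an $L^\infty$ bound held, it proves nothing here: $\gamma_{-1}$ is an \emph{infinite} measure, so $\|Tf\|_\infty\lesssim\|f\|_{L^1(\gamma_{-1})}$ does not control $\gamma_{-1}\{|Tf|>s\}$ for small $s$ (unlike the Gaussian case, where finiteness of the measure makes this step trivial). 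Moreover, your mechanism --- a polynomial of degree $|\alpha|$ ``absorbed by the Gaussian'' --- is completely insensitive to the order $|\alpha|$, so if it proved weak type $(1,1)$ of the global part for $|\alpha|\le 2$ it would prove it for all $\alpha$, contradicting the unboundedness you establish in the second half (the local part is bounded for every $\alpha$, so for $|\alpha|\ge3$ it is precisely the global part that fails). The order-dependence must enter the global analysis, and your proposal gives no place for it to do so.

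What is actually needed for the global part is a family of genuinely weak-type (not strong-type) estimates proved by distribution-function arguments: after bounding $|H_\alpha|$ by $\sum_{a\le|\alpha|}|x-ry|^a(1-r^2)^{-a/2}$ and splitting the $r$-integral according to the position of $r_0=(y\cdot x)/|x|^2$ relative to $1$, one reduces to kernels such as $e^{-|x|^2+|y|^2}\bigl[(1+|x|)^n\wedge(|x|\sin\theta)^{-n}\bigr]$ (Salogni/GMST), $e^{-|x|^2+|y|^2}|x|^{-\mu}(1+|x|)^{-\nu}$, and, for the hardest term arising only when $a=2$, a kernel of the form $e^{-|x|^2+|y|^2}\,|x|^{(n+1)/2}|x-y_x|^{-(n-1)/2}e^{-\delta|y_\perp|^2|x|/|x-y_x|}$ on the region $|x||x-y_x|\ge1$, whose weak type $(1,1)$ rests on a nontrivial result of Li and Sj\"ogren, together with the Stein--Weiss lemma for summing weak-type quasi-norms. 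This last kernel is exactly where $|\alpha|=2$ is borderline, and it is not amenable to any $L^\infty$ or absolute-integrability bound. Your anticipated difficulty (``cancellation between terms of mixed sign'' in the second derivative of the Mehler kernel) is also a red herring: the proof proceeds by absolute values throughout, and no cancellation in $y$ is used. As it stands, the positive half of your argument for the global region does not go through and is not repairable along the lines you describe.
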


By~\cite[Remark~2.5]{Bruno}, $\Rs_\alpha$ is bounded on $L^p(\gamma_{-1})$ for every $p\in (1,\infty)$ when $|\alpha|=1$. We do not know whether the same holds in general when $|\alpha|>1$, though we strongly expect so; we leave the investigation of this problem to future work.

The paper is devoted to the proof of Theorem~\ref{main_theorem}. We determine the integral kernel of the Riesz transform $\Rs_\alpha$ and split it into a local and a global part. The local part behaves like a classical Calder\'on--Zygmund kernel, and its weak type $(1,1)$ follows by an adaptation of the classical Calder\'on--Zygmund theory, developed in~\cite{Salogni}. This boundedness holds for the Riesz transforms of any order.  The heart of the proof concerns the global part of the kernel, which requires \emph{ad hoc} techniques where the order of the transforms matters. 

\smallskip

The paper is organized as follows. In Section~\ref{Sec:As}, the operator $\As$, its Mehler-type kernel and its Riesz transforms are defined. We then prove the weak type $(1,1)$ of the local parts of the Riesz transforms in Section~\ref{Sec:local}. In Section~\ref{Sec:lemmata}, we prove weak type $(1,1)$ estimates for operators with several kinds of kernels. These estimates will allow us, in Section~\ref{Sec:global}, to complete the proof of the ``if'' part of Theorem~\ref{main_theorem}. In the final Section~\ref{Sec:3oltre}, it is shown that the weak type $(1,1)$ of the Riesz transforms of order higher than two cannot hold.

\bigskip

We now explain some notation. Let $\underline{0}$ denote the null vector $(0,\dots, 0)\in \N^n$. For $\alpha \in \N^n\setminus\{\underline{0}\}$, we write $\partial^\alpha$ for the differential operator $\partial_{x_1}^{\alpha_1} \cdots \partial_{x_n}^{\alpha_n}$. The Lebesgue measure on $\R^n$ will be denoted by $\dd x$ or by $\mathrm{Leb}$.  If $E$ is a measurable set, $\mathbf{1}_E$ will stand for its characteristic function. Given a linear operator $T$ mapping test functions into measurable functions on $\R^n$, we say that a measurable function $K$ defined and locally bounded off the diagonal in $\R^n \times \R^n$ is the integral kernel of $T$ if, for any test function $f$,
\[
Tf(x)= \int_{\R^n} K(x,y) f(y)\, \dd y, \qquad x\notin \supp(f).
\] 
In other words, the integral kernels of our operators will be taken with respect to Lebesgue measure.

We denote by $C<\infty$, or $c>0$, a constant that may vary from place to place but is independent of significant quantities. Given two positive quantities $A$ and $B$, we shall write $A\lesssim B$ or $B\gtrsim A$ if $A\leq C B$. If $A\lesssim B$ and $B\lesssim A$, we write $A\approx B$. The symbols $A\vee B$ and $A \wedge B$ will stand for $\max (A,B)$ and $\min (A,B)$ respectively.

\section{The operator $\As$ and its Riesz transforms}\label{Sec:As}
For $x\in \R^n$, let $\gamma(x)= \pi^{-n/2}e^{-|x|^2}$ and $\gamma_{-1}(x)= \pi^{n/2}e^{|x|^2}$. With a slight abuse of notation, we identify $\gamma$ and $\gamma_{-1}$ with the measures $\gamma(x)\, \dd x$ and $\gamma_{-1}(x)\, \dd x$, respectively. Consider the second order differential operator
\[
\As_0f(x)=-\frac{1}{2}\Delta f(x)  - x\cdot \nabla f(x), \qquad f\in C_c^\infty(\R^n), \: x\in \R^n, 
\]
which is essentially self-adjoint on $L^2(\gamma_{-1})$. Denote with $\As$ its closure. It is known, see~\cite{Salogni} and~\cite{Bruno}, that the $L^2(\gamma_{-1})$-spectrum of $\As$ is the discrete set $\{n,n+1,\dots\}$, and that its eigenfunctions are the functions $(\gamma H_\alpha)_{\alpha \in \N^n}$, where  $H_\alpha$ is an $n$-dimensional Hermite polynomial, i.e.,
\begin{equation}\label{Hermite}
H_\alpha = \bigotimes_{j=1}^n H_{\alpha_j}, \qquad H_{k}(s) =(-1)^k e^{s^2}\frac{d^k}{ds^k}e^{-s^2}, \quad k\in \N, \: s\in \R.
\end{equation}
Recall that for $t>0$, the integral kernel of the operator $e^{-t\As}$ is
\[H_t(x,y) = \frac{e^{-nt}}{\pi^{n/2}(1-e^{-2t})^{n/2}} e^{-\frac{|x-e^{-t}y|^2}{1-e^{-2t}}}, \qquad x,y\in \R^n;\]
see~\cite{Salogni} and~\cite{Bruno} for this and further details about $\As$.

For $b>0$, the kernel of $\As^{-b}$ is given by the subordination formula
\[K_{\As^{-b}}(x,y) = \Gamma(b)^{-1} \int_0^\infty t^{b-1} H_t(x,y)\, \dd t.\]
The kernel of the Riesz transform $\Rs_\alpha=\partial^\alpha \As^{-|\alpha|/2}$ with $\alpha\in \N^n\setminus\{ \underline{0}\}$ is
\[\begin{split}
K_{\Rs_\alpha} (x,y)
&= \pi^{-n/2}\, \Gamma(|\alpha|/2)^{-1} \int_0^\infty t^{\frac{|\alpha|}{2}-1} \frac{e^{-n t}}{(1-e^{-2t})^{n/2}} \, \partial_x^\alpha e^{-\frac{|x-e^{-t}y|^2}{1-e^{-2t}}}\, dt,\end{split}
\]
in the principal value sense. By the change of variables $t= -\log r$, and recalling~\eqref{Hermite}, we obtain that the kernel of $\Rs_\alpha$ is the principal value of
\begin{align}
K_{\Rs_\alpha} (x,y)
&=\frac{1}{ \pi^{n/2}\, \Gamma(|\alpha|/2)} \int_0^1 \frac{r^{n-1}(-\log r)^{\frac{|\alpha|}{2}-1}}{(1-r^{2})^{n/2}} \, \partial_x^\alpha e^{-\frac{|x-ry|^2}{1-r^2}}\, dr\nonumber \\
&= \frac{(-1)^{|\alpha|}}{ \pi^{n/2}\, \Gamma(|\alpha|/2)}  \int_0^1 \frac{r^{n-1}(-\log r)^{\frac{|\alpha|}{2}-1}}{(1-r^{2})^{(n+|\alpha|)/2}}\, H_\alpha\left(\frac{x-ry}{\sqrt{1-r^2}}\right) e^{-\frac{|x-ry|^2}{1-r^2}}\, dr \label{kernelRalpha1} \\
&= \frac{(-1)^{|\alpha|}}{ \pi^{n/2}\, \Gamma(|\alpha|/2)} \, e^{-|x|^2+|y|^2} \int_0^1 \frac{r^{n-1}(-\log r)^{\frac{|\alpha|}{2}-1}}{(1-r^{2})^{(n+|\alpha|)/2}}\,H_\alpha\left(\frac{x-ry}{\sqrt{1-r^2}}\right) e^{-\frac{|rx-y|^2}{1-r^2}}\, dr, \label{kernelRalpha}
\end{align}
the last equality since 
\[
-\frac{|x-ry|^2}{1-r^2}+\frac{|rx-y|^2}{1-r^2} = -|x|^2+|y|^2.
\]

We split $\R^n \times \R^n$ into a local and a global region. For $\delta=1,2$, define
\begin{equation}\label{LandG}
N_\delta = \bigg\{(x,y)\in \R^n\times \R^n \colon |x-y|\leq \frac{\delta}{1+|x|+|y|}\bigg\}
\end{equation}
and $G =N_1^c$. The regions $N_1$ and $N_2$ will be called \emph{local} regions, while $G$ will be the \emph{global} region. As in~\cite{Bruno}, we fix a smooth function $\chi\colon \R^{n}\times \R^n \to \R$ such that
\[\mathbf{1}_{N_1}\leq \chi \leq \mathbf{1}_{N_2},\qquad |\nabla_x \chi(x,y)|+|\nabla_y \chi(x,y)|\leq \frac{C}{|x-y|}\quad \mbox{for every } x\neq y.\]
We also define
\[
K_{\Rs_\alpha,\loc} = \chi K_{\Rs_\alpha}, \qquad K_{\Rs_\alpha,\glob} =  K_{\Rs_\alpha}-K_{\Rs_\alpha,\loc} .\]
We shall denote the operators with kernels (the principal value of) $K_{\Rs_\alpha,\loc} $ and $K_{\Rs_\alpha,\glob} $ by $\Rs_{\alpha,\loc}$ and $\Rs_{\alpha,\glob}$, respectively. Theorem~\ref{main_theorem} will be proved by putting together Proposition~\ref{Prop_local} (boundedness of $\Rs_{\alpha,\loc}$ for every $\alpha$), Proposition~\ref{Prop_glob} (boundedness of $\Rs_{\alpha,\glob}$ when $|\alpha|\leq 2$), and Proposition~\ref{Prop_3} (unboundedness of $\Rs_\alpha$ when $|\alpha|\geq 3$).

\section{The local part of the Riesz transforms}\label{Sec:local}
In this section, we prove the boundedness of $\Rs_{\alpha, \loc}$.  As we shall see, no restriction on $\alpha$ is necessary: indeed, in the local region all the Riesz transforms of $\As$ behave essentially like classical Calder\'on--Zygmund operators.
\begin{proposition}\label{Prop_local}
$\Rs_{\alpha,\loc}$ is bounded from $L^1(\gamma_{-1})$ to $L^{1,\infty}(\gamma_{-1})$ for every $\alpha\in \N^n \setminus\{ \underline{0}\}$.
\end{proposition}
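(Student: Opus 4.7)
The plan is to combine Calder\'on--Zygmund-type kernel estimates with the local Calder\'on--Zygmund machinery adapted to $\gamma_{-1}$ in~\cite{Salogni}. First, I observe that on the local region $N_2$ we have $|x-y|(|x|+|y|)\leq 2$, hence $||x|^2-|y|^2|\leq 2$ and so $\gamma_{-1}(x) \approx \gamma_{-1}(y)$. In particular, $\gamma_{-1}$ is comparable to a constant multiple of Lebesgue measure on any Euclidean ball that stays inside $N_2$, which is precisely the setting in which Salogni's local CZ theory applies.

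The core pointwise estimates to establish are, for $(x,y)\in N_2$ with $x\ne y$,
\[
|K_{\Rs_\alpha,\loc}(x,y)| \lesssim |x-y|^{-n}, \qquad |\nabla_x K_{\Rs_\alpha,\loc}(x,y)| + |\nabla_y K_{\Rs_\alpha,\loc}(x,y)| \lesssim |x-y|^{-n-1}.
\]
For the size bound, I would start from~\eqref{kernelRalpha1} and use the elementary bound $|H_\alpha(z)|\lesssim (1+|z|)^{|\alpha|}$, which absorbs a fraction of the Gaussian and gives
\[
|K_{\Rs_\alpha}(x,y)| \lesssim \int_0^1 \frac{r^{n-1}(-\log r)^{|\alpha|/2-1}}{(1-r^2)^{(n+|\alpha|)/2}}\, e^{-|x-ry|^2/(2(1-r^2))}\, dr.
\]
The contribution from $r\in (0,1/2)$ is bounded uniformly in $(x,y)$ and hence negligible, since $|x-y|\lesssim 1$ on $N_2$. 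For $r$ near $1$, I change variable to $u=1-r^2\approx 2(1-r)$, reducing the integrand to $u^{-n/2-1}e^{-|x-ry|^2/(2u)}$ up to non-singular factors. Writing $x-ry=(x-y)+(1-r)y$ and distinguishing whether $(1-r)|y|\leq |x-y|/2$ or not, one finds $|x-ry|^2\gtrsim |x-y|^2$ on the bulk of the integration range, and a standard Gaussian computation yields $|x-y|^{-n}$. The gradient bound follows by differentiating~\eqref{kernelRalpha1}; each derivative produces an extra factor $(1-r^2)^{-1/2}$ in the integrand, which after the same substitution yields the additional $|x-y|^{-1}$ factor. Derivatives falling on $\chi$ are controlled by the estimate $|\nabla\chi|\lesssim 1/|x-y|$ combined with the already-established size bound.

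With these CZ-type estimates and the support of $K_{\Rs_\alpha,\loc}$ in $N_2$, the proposition follows from the local CZ theory of~\cite{Salogni}. The $L^2(\gamma_{-1})$-boundedness required as input is extracted by covering $N_2$ by a bounded-overlap family of Euclidean balls on which $\gamma_{-1}$ is essentially constant and comparing the operator on each ball to a standard Euclidean Calder\'on--Zygmund operator whose $L^2$-boundedness is classical. The main obstacle is the size estimate in the awkward regime $(1-r)|y|\approx |x-y|$, where $|x-ry|$ can be smaller than either quantity; a careful use of the Gaussian factor and of the balance between $(1-r)|y|$ and $\sqrt{1-r^2}$ is needed to keep the estimate uniform in $\alpha$, since higher-order Hermite polynomials grow more rapidly.
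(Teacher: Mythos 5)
Your treatment of the kernel estimates is essentially the paper's Step~1, just unpacked: the factor $(-\log r)^{|\alpha|/2-1}\approx(1-r)^{|\alpha|/2-1}$ does combine with $(1-r^2)^{-(n+|\alpha|)/2}$ to give $u^{-n/2-1}$ after $u=1-r^2$, and the ``awkward regime'' $(1-r)|y|\approx|x-y|$ that you flag as the main obstacle is in fact resolved directly by the defining inequality of $N_2$: there $|x-y|\,|y|\leq 2$, so $(1-r)|y|>|x-y|/2$ forces $u\gtrsim 1-r\gtrsim|x-y|^2$, whence $\int_{u\gtrsim|x-y|^2}u^{-n/2-1}\,\dd u\lesssim|x-y|^{-n}$ with no further use of the Gaussian. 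This is precisely the content of Lemma~\ref{lemmalocal} (quoted in the paper from~\cite{Bruno}), so this half of your argument is sound, if not fully written out.

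The genuine gap is the $L^q(\gamma_{-1})$-boundedness input to Salogni's local Calder\'on--Zygmund theorem, which is a hypothesis you cannot dispense with: the size and gradient bounds $|K|\lesssim|x-y|^{-n}$ and $|\nabla K|\lesssim|x-y|^{-n-1}$ alone never imply $L^2$ boundedness (the kernel $|x-y|^{-n}$ itself satisfies them), so some cancellation must be exhibited. Your proposal to ``compare the operator on each ball to a standard Euclidean Calder\'on--Zygmund operator whose $L^2$-boundedness is classical'' does not identify which operator that is, nor why the difference of the two kernels is integrable on $N_2$; for a higher-order transform $\partial^\alpha\As^{-|\alpha|/2}$ the natural Euclidean model would be $\partial^\alpha(-\Delta/2)^{-|\alpha|/2}$, and proving that $K_{\Rs_\alpha}$ minus its kernel is locally integrable on $N_2$ is a substantive computation that your sketch omits entirely. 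The paper sidesteps all of this with a short global spectral argument: the functions $\gamma h_\beta$ form an orthonormal eigenbasis of $\As$ in $L^2(\gamma_{-1})$ with eigenvalues $|\beta|+n$, one has $\partial^\alpha(\gamma h_\beta)=\pm\,2^{|\alpha|/2}\sqrt{(\beta+\alpha)!/\beta!}\;\gamma h_{\beta+\alpha}$, and the resulting coefficients $(|\beta|+n)^{-|\alpha|/2}\sqrt{(\beta+\alpha)!/\beta!}$ are uniformly bounded, so $\Rs_\alpha$ is bounded on all of $L^2(\gamma_{-1})$. You should either adopt that argument or carry out the kernel comparison in full; as written, the $L^2$ step is asserted rather than proved.
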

The proof of Proposition~\ref{Prop_local} will be reduced to proving the boundedness of $\Rs_\alpha$ on $L^2(\gamma_{-1})$ and some Calder\'on--Zygmund type estimates of  $K_{\Rs_\alpha,\loc}$. Following~\cite{Salogni}, we say that a linear operator $T$, mapping test functions into measurable functions on $\R^n$, is a local Calder\'on--Zygmund operator if
\begin{itemize}
\item[(a)] $T$ is a bounded operator on $L^q(\gamma_{-1})$ for some $q\in (1,\infty)$;
\item[(b)] $T$ has an integral kernel $K$, defined and locally integrable off the diagonal in $\R^n \times \R^n$, satisfying
\begin{equation}\label{estimates_LCZ}
|K(x,y)|\lesssim \frac{1}{|x-y|^n} ,\qquad  |\nabla_x K(x,y)|+|\nabla_y K(x,y)|\lesssim \frac{1}{|x-y|^{n+1}}
\end{equation}
for all $(x, y)\in N_2$, $x\neq y$.
\end{itemize}
If $\Rs_\alpha$ is a local Calder\'on--Zygmund operator, then $\Rs_{\alpha, \loc}$ is bounded on $L^p(\gamma_{-1})$ for any $p\in (1,\infty)$, and from $L^1(\gamma_{-1})$ to $L^{1,\infty}(\gamma_{-1})$; see e.g.~\cite[Theorem 3.2.8]{Salogni}.

\smallskip

When $|\alpha|=1$, the boundedness of $\Rs_{\alpha,\loc}$ was proved in~\cite[Proposition 6.6]{Bruno}. We now extend that argument to any nonzero $\alpha$. The key tool will be the following lemma, see~\cite[Lemma 6.3]{Bruno}.
\begin{lemma}\label{lemmalocal}
Let $\mu,\nu\geq 0$ be such that $\mu >\nu +1$. Then, for every $(x,y)\in N_2$, $x\neq y$,
\[
\int_0^1 \frac{|x-ry|^\nu }{(1-r^2)^{\frac{n+\mu}{2}}}e^{-\frac{|x-ry|^2}{1-r^2}}\, \dd r \lesssim \frac{1}{|x-y|^{n+\mu-\nu-2}}.
\]
\end{lemma}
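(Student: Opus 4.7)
My plan is to first reduce the claim to a simpler form by absorbing the polynomial factor $|x-ry|^{\nu}$ into a weaker exponential. The elementary bound $s^{\nu/2}e^{-s}\lesssim e^{-s/2}$ (valid for $s\geq 0$), applied with $s = |x-ry|^{2}/(1-r^{2})$, dominates the integrand by a constant multiple of $(1-r^{2})^{-a}\exp\bigl(-|x-ry|^{2}/(2(1-r^{2}))\bigr)$, where $a := (n+\mu-\nu)/2$. The hypothesis $\mu>\nu+1$ forces $a > (n+1)/2 \geq 1$; in particular $2a-2 = n+\mu-\nu-2$. It therefore suffices to show
\[
J \;:=\; \int_{0}^{1}\frac{\exp\bigl(-|x-ry|^{2}/(2(1-r^{2}))\bigr)}{(1-r^{2})^{a}}\,dr \;\lesssim\; \frac{1}{|x-y|^{2a-2}}
\qquad \text{on } N_{2},\ x\neq y.
\]

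The contribution of $r\in[0,1/2]$ is a bounded constant, which is acceptable because the defining inequality $|x-y|(1+|x|+|y|)\leq 2$ of $N_{2}$ in particular yields $|x-y|\leq 2$, so $|x-y|^{2-2a}$ is bounded below. For the remaining range I would substitute $\epsilon=1-r$, so that $1-r^{2}\approx\epsilon$, and write
\[
x-ry \;=\; (x-y) + \epsilon y,
\qquad
|x-ry|^{2} \;=\; d^{2} + 2\epsilon\,y\!\cdot\!(x-y) + \epsilon^{2}R^{2},
\]
where $d := |x-y|$ and $R := |y|$. The crucial geometric input from the locality hypothesis is the bound $dR\leq 2$.

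The heart of the argument is a three-case split of $\epsilon\in(0,1/2)$, based on the pointwise inequality $|\mathbf{a}+\mathbf{b}|^{2}\geq\tfrac{1}{2}|\mathbf{a}|^{2}-|\mathbf{b}|^{2}$ applied to $\mathbf{a}=x-y$ and $\mathbf{b}=\epsilon y$ (and the analogous swap). In the regime $\epsilon R\leq d/2$ it gives $|x-ry|^{2}\geq d^{2}/4$, and the substitution $u = d^{2}/(c\epsilon)$ reduces the contribution to $d^{2-2a}\int_{0}^{\infty}u^{a-2}e^{-u}\,du$, which is finite because $a>1$. In the regime $\epsilon R\geq 2d$ the same inequality (with $\mathbf{a}$ and $\mathbf{b}$ swapped) yields $|x-ry|^{2}\geq\epsilon^{2}R^{2}/4$, and the substitution $u = c\epsilon R^{2}$ converts the contribution into $R^{2a-2}\int_{cdR}^{\infty}u^{-a}e^{-u}\,du\lesssim R^{2a-2}(dR)^{1-a} = R^{a-1}d^{1-a} = (Rd)^{a-1}d^{2-2a}$, which is $\lesssim d^{2-2a}$ by $Rd\leq 2$. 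On the intermediate band $\epsilon R\in(d/2,2d)$, where the Gaussian provides no useful decay, one simply integrates $\epsilon^{-a}$ directly between $d/(2R)$ and $2d/R$ to obtain $(R/d)^{a-1} = (Rd)^{a-1}d^{2-2a}$, again $\lesssim d^{2-2a}$.

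The main obstacle is the intermediate band, where the exponential cannot be used and one must rely entirely on the geometric constraint $|x-y|\cdot|y|\leq 2$ inherited from $N_{2}$; this is what ultimately pins down the sharp exponent $2a-2 = n+\mu-\nu-2$. Summing the three contributions with the bounded piece from $r\in[0,1/2]$ yields the desired bound $J\lesssim |x-y|^{-(n+\mu-\nu-2)}$.
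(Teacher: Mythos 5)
Your argument is correct: the reduction via $s^{\nu/2}e^{-s}\lesssim e^{-s/2}$ is valid, the hypothesis $\mu>\nu+1$ gives exactly $a=(n+\mu-\nu)/2>1$ which is what makes the $\Gamma$-integral in your first regime and the tail integral in your second regime converge, and the three-case split in $\epsilon=1-r$ together with the geometric input $|x-y|\,|y|\leq 2$ and $|x-y|\leq 2$ from $N_2$ correctly produces the exponent $2a-2=n+\mu-\nu-2$ in all regimes (the intermediate band is indeed where the bound is saturated). Note, however, that the paper does not prove this lemma at all: it is quoted from \cite[Lemma 4.3]{Bruno}, so there is no in-text proof to compare with; your write-up supplies a self-contained substitute, and it in fact shows that only the consequences $|x-y|(1+|y|)\lesssim 1$ of the $N_2$ condition are needed. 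For comparison, the standard (and shorter) route used in this circle of problems avoids your case analysis entirely: writing $x-ry=(x-y)+(1-r)y$ gives $|x-ry|^2\geq |x-y|^2-2(1-r)|y|\,|x-y|\geq |x-y|^2-4(1-r^2)$ on $N_2$, whence $e^{-|x-ry|^2/(1-r^2)}\lesssim e^{-|x-y|^2/(1-r^2)}$; combining this with your absorption of $|x-ry|^\nu$ reduces the whole lemma to the single elementary integral $\int_0^1 (1-r^2)^{-a}e^{-c|x-y|^2/(1-r^2)}\,dr\lesssim |x-y|^{2-2a}$, evaluated by the same substitution you use in your first regime. Your proof buys nothing beyond this except explicitness, but it is complete and the exponent bookkeeping is right, so there is no gap.
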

We are now in a position to prove Proposition~\ref{Prop_local}.

\begin{proof}[Proof of Proposition~\ref{Prop_local}]
It is enough to prove that $\Rs_\alpha$ is a local Calder\'on--Zygmund operator.

\smallskip

\emph{Step 1}. We prove that $K_{\Rs_\alpha}$ satisfies the estimates~\eqref{estimates_LCZ}. First, observe that by~\eqref{kernelRalpha1}
\begin{align*}
K_{\Rs_\alpha}(x,y) \lesssim & \int_0^{1/2} {r^{n-1}(-\log r)^{\frac{|\alpha|}{2}-1}} \left| H_\alpha\left(\frac{x-ry}{\sqrt{1-r^2}}\right) \right| e^{-\frac{|x-ry|^2}{1-r^2}}\, \dd r \\  & \qquad + \int_{1/2}^1 \frac{1}{(1-r)^{\frac{n}{2}+1}} \left| H_\alpha\left(\frac{x-ry}{\sqrt{1-r^2}}\right)\right| e^{-\frac{|x-ry|^2}{1-r^2}}\, \dd r.
\end{align*}
Since
\[
\left|H_\alpha\left(\frac{x-ry}{\sqrt{1-r^2}}\right)\right|\lesssim  \sum_{a=0}^{|\alpha|}  \left(\frac{|x-ry|}{\sqrt{1-r^2}}\right)^a, 
\]
and $s^a e^{-s^2} \lesssim 1$ for every $a\geq 0$ with a constant depending only on $a$,
\begin{align*}
\int_0^{1/2} {r^{n-1}(-\log r)^{\frac{|\alpha|}{2}-1}} \left| H_\alpha\left(\frac{x-ry}{\sqrt{1-r^2}}\right) \right| e^{-\frac{|x-ry|^2}{1-r^2}}\, \dd r  &\lesssim \int_0^{1/2} {r^{n-1}(-\log r)^{\frac{|\alpha|}{2}-1}} \, \dd r \lesssim 1.
\end{align*}
Moreover, by Lemma~\ref{lemmalocal} we obtain
\begin{align*}
\int_{1/2}^1 \frac{1}{(1-r)^{\frac{n}{2}+1}} \left| H_\alpha\left(\frac{x-ry}{\sqrt{1-r^2}}\right)\right| e^{-\frac{|x-ry|^2}{1-r^2}}\, \dd r
& \lesssim \sum_{a=0}^{|\alpha|}\int_0^1 \frac{|x-ry|^a}{(1-r^{2})^{\frac{n+2 +a}{2}}} e^{-\frac{|x-ry|^2}{1-r^2}}\, \dd r \\
&\lesssim \frac{1}{|x-y|^n}.
\end{align*}
The estimates of the gradients of $K_{\Rs_\alpha}$ can be obtained analogously. Indeed,
\begin{align*}
\left| \partial_{x_j}\left[ H_\alpha\left(\frac{x-ry}{\sqrt{1-r^2}}\right) e^{- \frac{|x-ry|^2}{1-r^2}}\right] \right|\lesssim e^{- \frac{|x-ry|^2}{1-r^2}}  \left( \sum_{a=1}^{|\alpha|} \frac{|x-ry|^{a-1}}{(1-r^2)^{\frac{a}{2}}} +\sum_{a=0}^{|\alpha|}\frac{|x-ry|^{a+1}}{(1-r^2)^{\frac{a}{2}+1}}\right),
\end{align*}
and the same estimates hold for the derivative along $y_j$. The integral over $(0,1/2)$ goes as before, and by Lemma~\ref{lemmalocal}
\begin{align*}
&|\nabla_xK_{\Rs_\alpha}(x,y)| + |\nabla_y K_{\Rs_\alpha}(x,y)| \\
&\qquad  \lesssim  1+ \left( \sum_{a=1}^{|\alpha|}  \int_{1/2}^1 \frac{|x-ry|^{a-1}}{(1-r^{2})^{\frac{n+2 +a}{2}}} e^{-\frac{|x-ry|^2}{1-r^2}}\, dr +\sum_{a=0}^{|\alpha|} \int_{1/2}^1 \frac{|x-ry|^{a+1}}{(1-r^{2})^{\frac{n+4 +a}{2}}} e^{-\frac{|x-ry|^2}{1-r^2}}\, dr \right)\\
&\qquad  \lesssim \frac{1}{|x-y|^{n+1}}.
\end{align*}
Thus, the estimates~\eqref{estimates_LCZ} hold for $K_{\Rs_\alpha}$.

\smallskip

\emph{Step 2}. We prove that $\Rs_{\alpha}$ is bounded on $L^2(\gamma_{-1})$. For $\beta \in \N^n$, we denote by $h_\beta$ the normalized Hermite polynomial
\[
h_\beta = 2^{-\frac{|\beta|}{2}}\,(\beta!)^{-1/2}\, H_\beta.
\] 
The functions $\gamma \, h_\beta$ form an orthonormal basis in $L^2(\gamma_{-1})$, see e.g.~\cite[\S 5.5]{Szego}. Since for $b>0$
\[
\As^{-b} f = \sum_{\beta \in \N^n} \frac{1}{(|\beta|+n)^b}  \,(f,\gamma\, h_\beta)_{L^2(\gamma_{-1})}\,  \gamma \, h_{\beta},
\]
and, by the definition of $h_\beta$ and $H_\beta$,
\[
\partial^\alpha (\gamma \, h_\beta )= 2^{\frac{|\alpha|}{2}} (-1)^{|\alpha|+|\beta|}\, \sqrt{\frac{(\beta + \alpha)!}{\beta!}}  \, \gamma \, h_{\beta + \alpha},
\]
we obtain
\begin{align*}
\Rs_\alpha f 
&= \partial^\alpha \As^{-|\alpha|/2} f \\
& =  \sum_{\beta} \frac{1}{(|\beta|+n)^{|\alpha|/2}} \, (f,\gamma\, h_\beta)_{L^2(\gamma_{-1})} \, \partial^\alpha (\gamma h_{\beta})\\
&= 2^{|\alpha|/2}  \sum_{\beta} (-1)^{|\alpha|+|\beta|}\frac{1}{(|\beta|+n)^{|\alpha|/2}} \, (f,\gamma\, h_\beta)_{L^2(\gamma_{-1})} \, \sqrt{\frac{(\beta + \alpha)!}{\beta!}} \, \gamma \, h_{\beta +\alpha}.
\end{align*}
Since the $(\gamma\, h_\nu)$ are orthonormal in $L^2(\gamma_{-1})$, and
\[
  \frac{1}{(|\beta|+n)^{|\alpha|}} \,\frac{(\beta + \alpha)!}{\beta!} \lesssim 1,
\]
where the implicit constant may depend on $\alpha$ and $n$, we conclude
\begin{align*}
\|\Rs_\alpha  f \|^2_{L^2(\gamma_{-1})} & = 2^{|\alpha|} \sum_{\beta}  \frac{1}{(|\beta|+n)^{|\alpha|}}  \frac{(\beta + \alpha)!}{\beta!}\, (f,\gamma\, h_\beta)_{L^2(\gamma_{-1})}^2\\
&\lesssim 2^{|\alpha|} \sum_{\beta}  (f,\gamma\, h_\beta)_{L^2(\gamma_{-1})}^2\\
& \lesssim \|f\|^2_{L^2(\gamma_{-1})}.
\end{align*}
This concludes Step 2 and proves that $\Rs_{\alpha}$ is a local Calder\'on--Zygmund operator. The proposition follows.
\end{proof} 

\section{Some technical lemmata}\label{Sec:lemmata}
In this section, we prove some lemmata concerning the weak type $(1,1)$ of various integral operators with given kernels. The kernels are those appearing in the analysis of the Riesz transforms of order $1$ and $2$ in the next section.
 
We first introduce some notation, which will be used in the rest of the paper. Given a couple $(x,y)\in \R^n\times \R^n$, $x\neq \underline{0}$, we shall write
\[y=y_x + y_\perp,\]
where $y_x$ is parallel to $x$ and $y_\perp$ is orthogonal to $x$. We denote with $r_0$ the unique real number such that $y_x=r_0 x$, and let $\theta=\theta(x,y)\in [0,\pi]$ be the angle between $x$ and $y$.  Then,
\[ r_0= (|y|/|x|)\cos \theta,\quad |r_0|=|y_x|/|x|, \qquad |y_\perp|=|y|\sin \theta.
\]
Observe moreover that
\begin{equation}\label{rx-y}
|rx-y|^2=|r-r_0|^2|x|^2 +|y_\perp|^2,
\end{equation}
that 
\begin{equation}\label{1-r0}
|1-r_0|= |x-y_x|/|x|,
\end{equation}
and that
\begin{align}\label{x-ry}
|x-ry| 
&\leq |x-y_x| + |y_x-r y| \nonumber \\ 
&\leq |x-y_x|+ (1-r)|y_x| +r|y_\perp|.
\end{align}
Notice also that $|x \pm y|\geq |x|\sin\theta$ and that, if $(x,y)\in G$, then $|x-y|\geq \frac{1}{2}(1+|x|)^{-1}$, see e.g.~\cite[Lemma 6.2]{Bruno}.

It will be useful to recall that if $T$ is a linear operator such that 
\[
T = \sum_{m=0}^\infty T_m
\]
for some linear operators $(T_m)_{m\in \N}$, each bounded from $L^{1}(\gamma_{-1})$ to $L^{1,\infty}(\gamma_{-1})$, then 
\begin{equation}\label{somma1infty}
\|T\|_{L^1(\gamma_{-1})\to L^{1,\infty}(\gamma_{-1})} \leq \sum_{m=0}^\infty \| T_m\|_{L^1(\gamma_{-1})\to L^{1,\infty}(\gamma_{-1})} \left(1+\log\left(1+\| T_m\|_{L^1(\gamma_{-1})\to L^{1,\infty}(\gamma_{-1})}\right)\right),
\end{equation}
see~\cite[Lemma 2.3]{SW}.

We now begin with the lemmata giving the weak type $(1,1)$ of some integral operators. The first is nothing but~\cite[Lemma 3.3.4]{Salogni} (see also~\cite[Theorem 1]{GMST}), which we restate for the reader's convenience.

\begin{lemma}\label{nucleoSalogni}
The operator with kernel
\[
K (x,y)=e^{-|x|^2+|y|^2}\left[(1+|x|)^n \wedge (|x|\sin\theta)^{-n}\right]
\]
is bounded from $L^1(\gamma_{-1})$ to $L^{1,\infty}(\gamma_{-1})$.
\end{lemma}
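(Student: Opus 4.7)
This statement coincides with~\cite[Lemma~3.3.4]{Salogni} and is closely related to~\cite[Theorem~1]{GMST}; the plan is to sketch the strategy behind those proofs.

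By positivity I may assume $f\geq 0$, and set $F(y):=\pi^{n/2}e^{|y|^2}f(y)$, so that $\|F\|_{L^1(dy)}=\|f\|_{L^1(\gamma_{-1})}$ and $Tf(x)=\pi^{-n/2}e^{-|x|^2}SF(x)$, where
\[
SF(x):=\int_{\R^n}\bigl[(1+|x|)^n\wedge(|x|\sin\theta(x,y))^{-n}\bigr]\,F(y)\,dy.
\]
The weak-type inequality for $T$ from $L^1(\gamma_{-1})$ to $L^{1,\infty}(\gamma_{-1})$ thus reduces to
\[
\int_{\{x\in\R^n\,:\,SF(x)>c\lambda\,e^{|x|^2}\}}e^{|x|^2}\,dx \;\lesssim\; \lambda^{-1}\,\|F\|_{L^1(dy)},\qquad \lambda>0.
\]
The geometric meaning of the bracket is that $|x|\sin\theta(x,y)$ is the distance from $x$ to the line $\R y$, so the second term blows up precisely when $x$ approaches that line.

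The next step is to decompose $SF$ dyadically in the angular variable: for $k\geq 0$, let $S_k$ be the operator obtained by restricting the bracket to $\{(x,y)\,:\,2^{-k-1}<\sin\theta(x,y)\leq 2^{-k}\}$, plus a separate piece on the thin pencil $\{\sin\theta\leq(1+|x|)^{-1}\}$. On the $k$-th slab the integrand is essentially $\min\bigl((1+|x|)^n,(2^{-k}|x|)^{-n}\bigr)$. The heart of the proof is to establish, for each $k$, a weak-type $(1,1)$ bound for $S_k$ with operator norm growing at most polynomially in $k$, so that~\eqref{somma1infty} assembles the pieces into a single weak-type estimate. Following the Mauceri--Meda--Sj\"ogren approach as adapted in~\cite{Salogni}, I would cover the level set by admissible (Mauceri--Meda) balls of radius $\approx(1+|x|)^{-1}$, on which $\gamma_{-1}$ is locally doubling and comparable to a constant multiple of Lebesgue measure. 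A Vitali-type selection then reduces the matter to bounding the $\gamma_{-1}$-measure of the intersection of the cone $\{\sin\theta\leq 2^{-k}\}$ around the ray $\R_+ x/|x|$ with a dyadic annulus in $|x|$, whose Lebesgue measure is $\lesssim(2^{-k})^{n-1}|x|^{-1}$; combined with $F$ this yields the desired bound.

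The principal obstacle is the non-doubling behaviour of $\gamma_{-1}$ on $\R^n$: a classical Calder\'on--Zygmund decomposition is unavailable, so the entire analysis must be carried out on admissible scales where $\gamma_{-1}$ behaves locally like a scalar multiple of Lebesgue measure. A secondary subtlety is extracting from the angular dyadic sum a series for which~\eqref{somma1infty} is applicable, rather than the (unavailable) $\ell^1$ summability of the slabwise operator norms.
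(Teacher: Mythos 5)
The paper does not actually prove this lemma: it is quoted verbatim from~\cite[Lemma~3.3.4]{Salogni} (see also~\cite[Theorem~1]{GMST}), so your opening citation is the ``official'' proof and is acceptable as such. The strategy you then sketch, however, is not the one used in those references, and it has a genuine gap: the dyadic angular decomposition cannot be summed. Your pieces $S_k$, supported where $\sin\theta(x,y)\approx 2^{-k}$, do \emph{not} have decaying weak-type norms. Test $S_k$ on a unit point mass at some $y_0\neq 0$: for $|x|=\rho\lesssim 2^{k/2}$ the minimum in the bracket equals $(1+\rho)^n$, so at the level $s=e^{-\rho_*^2}(1+\rho_*)^n$ with $\rho_*\approx 2^{k/2}$ the level set contains $\{x:\sin\theta(x,y_0)\approx 2^{-k},\ \rho_0\le|x|\le\rho_*\}$, whose $\gamma_{-1}$-measure is $\approx 2^{-k(n-1)}e^{\rho_*^2}\rho_*^{n-2}\approx 2^{-k(n-1)}\rho_*^{2n-2}/s\approx 1/s$. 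Hence $\|S_k\|_{L^1(\gamma_{-1})\to L^{1,\infty}(\gamma_{-1})}\gtrsim 1$ uniformly in $k$, the series in~\eqref{somma1infty} diverges, and no rearrangement of the angular sum can fix this. Relatedly, your stated criterion is wrong even on its own terms: \eqref{somma1infty} \emph{requires} (slightly more than) $\ell^1$ summability of the piecewise norms, so ``growing at most polynomially in $k$'' is not a usable hypothesis, and your closing remark that \eqref{somma1infty} is a substitute for $\ell^1$ summability has the logic reversed. The transition zone $|x|\approx(\sin\theta)^{-1}$, where the two terms of the minimum are comparable, is exactly what forces the kernel to be treated globally rather than slab by slab.

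The second questionable ingredient is the covering of the level set by Mauceri--Meda admissible balls with a Vitali selection. That machinery is what underlies the \emph{local} theory (Section~\ref{Sec:local} here, via~\cite[Theorem 3.2.8]{Salogni}); it gives nothing for this kernel, which is positive, non-local, and carries the factor $e^{-|x|^2+|y|^2}$ coupling far-apart points. The actual proof in~\cite{GMST, Salogni} — and the template used for the neighbouring Lemmata~\ref{nucleoBS} and~\ref{nucleo-Hard2} in this paper — is a direct level-set computation in polar coordinates: writing $Tf(x)=e^{-|x|^2}u(x)$ with $u(\rho x')$ controlled along each ray, one shows that the level set $\{Tf>s\}$ intersected with the ray in direction $x'$ is contained in $\{\rho\le r_s(x')\}$ for the largest root $r_s(x')$ of an explicit equation, and then integrates $e^{\rho^2}\rho^{n-1}\,d\rho$ up to $r_s(x')$, using the equation itself to convert $e^{r_s^2}$ into $1/s$ times a quantity that integrates over $\mathbb{S}^{n-1}$ against the mass of $f$. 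If you want to supply a self-contained proof, that is the route to take; your first-paragraph reduction to the measure $F(y)\,dy$ is correct and is the right starting point for it.
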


\begin{lemma}\label{nucleoBS}
Let $\mu, \nu \in \R$ be such that $\mu+\nu\geq n-2$ and $\mu\leq n$. Then the operator $T$ with kernel
\[
K(x,y)= \pi^{n/2} e^{-|x|^2+|y|^2}|x|^{-\mu} (1+|x|)^{-\nu}
\]
is bounded from $L^1(\gamma_{-1})$ to $L^{1,\infty}(\gamma_{-1})$.
\end{lemma}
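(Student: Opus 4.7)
The plan begins with the observation that the kernel $K$ has separated variables: since $e^{-|x|^2+|y|^2} = \pi^{-n/2} e^{-|x|^2}\gamma_{-1}(y)$, one gets, for $f \geq 0$,
$$
Tf(x) = \pi^{-n/2}\, g(x)\, \|f\|_{L^1(\gamma_{-1})}, \qquad g(x) := e^{-|x|^2}|x|^{-\mu}(1+|x|)^{-\nu}.
$$
Hence the weak type $(1,1)$ bound for $T$ reduces to the single distributional estimate $\gamma_{-1}\{x\in\R^n : g(x)>t\} \lesssim 1/t$ for all $t>0$. Passing to polar coordinates, this becomes the question of showing, uniformly in $t$,
$$
\int_0^\infty \mathbf{1}_{\{h(r)>ct\}}\, e^{r^2}\, r^{n-1}\, \dd r \lesssim \frac{1}{t}, \qquad h(r):=e^{-r^2}r^{-\mu}(1+r)^{-\nu}.
$$

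I would then split this integral into the ranges $r\leq 1$ and $r\geq 1$, where the hypotheses $\mu\leq n$ and $\mu+\nu\geq n-2$ play their respective roles. On $r\leq 1$ both $e^{-r^2}$ and $(1+r)^{-\nu}$ are $\asymp 1$, so $h(r)\asymp r^{-\mu}$. If $\mu\leq 0$, then $h$ is bounded on $[0,1]$, so the super-level set is empty for large $t$ and has measure $O(1)\leq 1/t$ for bounded $t$. If $0<\mu\leq n$, then $\{r\leq 1:h(r)>ct\}\subset\{r\lesssim t^{-1/\mu}\}$, and since $e^{r^2}\asymp 1$ there, its contribution is $\lesssim \min(1,t^{-n/\mu})$; the bound $t^{-n/\mu}\leq 1/t$ is exactly the content of the assumption $n/\mu\geq 1$.

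On $r\geq 1$ one has $h(r)\asymp r^{-\mu-\nu}e^{-r^2}$ and $r\mapsto r^{\mu+\nu}e^{r^2}$ is monotone increasing on $[1,\infty)$. Define $R_t\geq 1$ by $e^{R_t^2}R_t^{\mu+\nu}\asymp 1/t$ whenever possible (otherwise the set is empty); then the super-level set is contained in $[1,R_t]$, and a standard integration by parts yields
$$
\int_1^{R_t} e^{r^2}r^{n-1}\,\dd r \;\asymp\; e^{R_t^2}R_t^{n-2} \;=\; \frac{1}{t}\, R_t^{\,n-2-\mu-\nu}
$$
for $R_t$ large, while the integral is simply $O(1)$ (hence $\lesssim 1/t$ for bounded $t$) when $R_t$ stays bounded. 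The hypothesis $\mu+\nu\geq n-2$ now makes the exponent $n-2-\mu-\nu$ non-positive, so $R_t^{\,n-2-\mu-\nu}\leq 1$ and the bound $\lesssim 1/t$ follows.

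The main obstacle is really just the careful bookkeeping of exponents to confirm that the thresholds $\mu=n$ and $\mu+\nu=n-2$ are precisely the ones imposed by the two hypotheses, which control \emph{disjoint} regimes: $\mu\leq n$ handles the large-$t$ (small-$r$) behaviour and $\mu+\nu\geq n-2$ handles the small-$t$ (large-$r$) tail. One has to check that the various sign cases (e.g.\ $\mu\leq 0$, or $\mu+\nu<0$ when $n=1$), where one of the factors vanishes rather than blows up, do not generate additional obstructions; this follows from the basic observation that $g$ is then bounded on the relevant region, and $\gamma_{-1}\{g>t\}$ vanishes once $t$ exceeds that bound.
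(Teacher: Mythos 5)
Your proposal is correct and follows essentially the same route as the paper: both reduce, via the separation of variables in the kernel, to a one\-dimensional distributional estimate for the radial profile $e^{-r^2}r^{-\mu}(1+r)^{-\nu}$, bound the super-level set by an interval $[0,R_t]$, and then use $\int_0^R e^{r^2}r^{n-1}\,\dd r\lesssim R^n$ for $R\leq 1$ (where $\mu\leq n$ enters) and $\lesssim e^{R^2}R^{n-2}$ for $R\geq 1$ (where $\mu+\nu\geq n-2$ enters). The only cosmetic difference is that you split the radial integral at $r=1$ before locating the level set, whereas the paper takes the largest root $r_s$ of $g(r)=s$ first and then distinguishes $r_s\lessgtr 1$.
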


\begin{proof}
Let $f\in L^1(\gamma_{-1})$, $f\geq 0$, and observe that
\[
Tf(x) = e^{-|x|^2} |x|^{-\mu} (1+|x|)^{-\nu} \|f\|_{L^1(\gamma_{-1})}.
\]
Now, consider for  $s>0$ the set
\begin{align*}
A_s &=\left\{x\colon Tf(x)>s\right\} =\left\{x\colon e^{-|x|^2} |x|^{-\mu} (1+|x|)^{-\nu} \|f\|_{L^1(\gamma_{-1})} >s\right\}.
\end{align*}
Let $r=r_s$ be the largest positive solution of the equation
\[
e^{-r^2} r^{-\mu} (1+r)^{-\nu}\|f\|_{L^1(\gamma_{-1})} =s;
\]
clearly such a solution exists unless $A_s$ is empty. Now $A_s \subset B(0,r_s)$ and if $r_s\leq 1$, then
\[
\gamma_{-1}(A_s)\leq \int_{B(0,r_s)}e^{|x|^2}\,\dd x\lesssim  r_s^n \lesssim \frac{1}{s}\|f\|_{L^1(\gamma_{-1})}\, r_s^{n-\mu}\lesssim \frac{1}{s}\|f\|_{L^1(\gamma_{-1})},
\]
since $n-\mu\geq 0$. If instead $r_s\geq 1$,
\[
\gamma_{-1}(A_s)\leq \int_{B(0,r_s)}e^{|x|^2}\,\dd x \lesssim e^{r_s^2} \, r_s^{n-2}\lesssim\frac{1}{s}\|f\|_{L^1(\gamma_{-1})}\, r_s^{n-\mu-\nu-2}\lesssim \frac{1}{s}\|f\|_{L^1(\gamma_{-1})},
\]
since $n-\mu-\nu-2\leq 0$. This completes the proof.
\end{proof}

\begin{lemma}\label{nucleo-Hard2}
Let $\delta>0$. Then, the operator $T$ with kernel
\begin{equation*}
K(x,y)=e^{-|x|^2+|y|^2} e^{-\delta |y_\perp|^2} \,|x| \left(\frac{|y|}{|x|}\right)^{n-1}\mathbf{1}_{G\cap \{|y|\leq 2|x|\}}(x,y)
\end{equation*}
is bounded from $L^1(\gamma_{-1})$ to $L^{1,\infty}(\gamma_{-1})$.
\end{lemma}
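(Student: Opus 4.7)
The strategy is to split $K$ into two pieces and reduce each to one of the already-established lemmas. Using the elementary estimate $|y|^{n-1}=(|y_x|^{2}+|y_\perp|^{2})^{(n-1)/2}\lesssim |y_x|^{n-1}+|y_\perp|^{n-1}$ and the identity $|x|(|y|/|x|)^{n-1}=|y|^{n-1}|x|^{2-n}$, I write $K\lesssim K^A+K^B$, where $K^A$ carries the factor $|y_x|^{n-1}$ and $K^B$ carries $|y_\perp|^{n-1}$.

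For $K^B$ the polynomial factor is absorbed by the Gaussian: $|y_\perp|^{n-1}e^{-\delta|y_\perp|^{2}}\lesssim_\delta 1$, which yields
\[
K^B(x,y)\lesssim e^{-|x|^{2}+|y|^{2}}|x|^{2-n}\mathbf{1}_{G\cap\{|y|\leq 2|x|\}}\leq e^{-|x|^{2}+|y|^{2}}|x|^{2-n}.
\]
This is a kernel of the form covered by Lemma~\ref{nucleoBS} with parameters $\mu=n-2$ and $\nu=0$; the conditions $\mu+\nu=n-2\geq n-2$ and $\mu=n-2\leq n$ are satisfied, so the corresponding operator is of weak type $(1,1)$.

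For $K^A$, using $|y_x|\leq |y|\leq 2|x|$ gives $|y_x|^{n-1}\leq 2^{n-1}|x|^{n-1}$ and hence the reduction to the kernel
\[
\widetilde K(x,y):=|x|\,e^{-|x|^{2}+|y|^{2}}e^{-\delta|y_\perp|^{2}}\mathbf{1}_{G\cap\{|y|\leq 2|x|\}}.
\]
The plan is to compare $\widetilde K$ with the kernel $e^{-|x|^{2}+|y|^{2}}[(1+|x|)^{n}\wedge(|x|\sin\theta)^{-n}]$ of Lemma~\ref{nucleoSalogni}. The elementary inequality $|x|\lesssim(1+|x|)^{n}$ immediately handles the wide-angle regime $|x|\sin\theta\leq 1$, where $(1+|x|)^{n}$ is the smaller of the two terms in the Salogni kernel. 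In the tube regime $|x|\sin\theta\geq 1$, a pointwise bound against $(|x|\sin\theta)^{-n}$ fails, since $|x|^{n+1}\sin^{n}\theta$ may be arbitrarily large even when $e^{-\delta|y_\perp|^{2}}$ is close to $1$; to overcome this, I decompose
\[
e^{-\delta|y_\perp|^{2}}\leq\mathbf{1}_{|y_\perp|\leq 1}+\sum_{k\geq 1}e^{-\delta k^{2}}\mathbf{1}_{k<|y_\perp|\leq k+1}
\]
and apply the sum estimate~\eqref{somma1infty}. On each dyadic shell the combined constraints $|y_\perp|\leq k+1$, $|y|\leq 2|x|$, and $(x,y)\in G$ confine $y$ to a thin tube around the line through $x$, which allows a Salogni-type estimate with weak-type norm growing only polynomially in $k$; the rapidly decaying factor $e^{-\delta k^{2}}$ ensures convergence of the resulting series.

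The main obstacle is the tube-regime analysis: one must exploit the geometric constraints on each shell carefully enough to produce only polynomial growth in the shell index, so that the exponential factor $e^{-\delta k^{2}}$ dominates when summing via~\eqref{somma1infty}. This is the delicate core of the argument and is where most of the technical work will lie.
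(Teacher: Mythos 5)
There is a genuine gap, and it is fatal to the proposed reduction: the kernel $\widetilde K(x,y)=|x|\,e^{-|x|^2+|y|^2}e^{-\delta|y_\perp|^2}\mathbf{1}_{G\cap\{|y|\leq 2|x|\}}$ does \emph{not} define an operator of weak type $(1,1)$ when $n\geq 2$, so no amount of work in the ``tube regime'' can complete the argument. The step that loses the game is the bound $|y_x|^{n-1}\leq (2|x|)^{n-1}$: it discards the factor $(|y|/|x|)^{n-1}$, which is essential precisely when $|y|\ll|x|$. Concretely, take $n\geq 2$ and let $f$ be an approximate point mass at some $y_0$ with $|y_0|=1$, normalized in $L^1(\gamma_{-1})$. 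For \emph{every} $x$ with $|x|\geq 2$ one has $(x,y_0)\in G$, $|y_0|\leq 2|x|$ and $|y_{0\perp}|\leq|y_0|=1$, so $e^{-\delta|y_{0\perp}|^2}\geq e^{-\delta}$ and hence $\widetilde Tf(x)\gtrsim |x|e^{-|x|^2}$ on the whole region $|x|\geq 2$. The level set $\{\widetilde Tf>s\}$ then contains a full annulus $\{2\leq|x|\leq r_s\}$ with $r_se^{-r_s^2}\approx s$, whose $\gamma_{-1}$-measure is $\approx r_s^{n-2}e^{r_s^2}$, giving $s\,\gamma_{-1}(\{\widetilde Tf>s\})\approx r_s^{n-1}\to\infty$ as $s\to 0$. (Equivalently: $\widetilde K\leq e^{-|x|^2+|y|^2}|x|^{-\mu}(1+|x|)^{-\nu}$ only with $\mu=-1$, $\nu=0$, which violates the hypothesis $\mu+\nu\geq n-2$ of Lemma~\ref{nucleoBS} for $n\geq 2$.) Note that this counterexample lives entirely in your $k=0$ shell $\{|y_\perp|\leq 1\}$ and, for generic directions of $x$, in the regime $|x|\sin\theta\geq 1$; so the dyadic decomposition of $e^{-\delta|y_\perp|^2}$ and the summation via~\eqref{somma1infty} cannot repair the loss. (A secondary, fixable slip: on $\{|x|\sin\theta\leq 1\}$ it is not true that $(1+|x|)^n$ is the smaller term of the Salogni kernel --- that would require $(1+|x|)|x|\sin\theta\lesssim 1$ --- so even the ``easy'' regime of your comparison with Lemma~\ref{nucleoSalogni} is not correctly delimited.)

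The factor $(|y|/|x|)^{n-1}$ must therefore be carried through the whole argument, and this is exactly what the paper's proof does. There one keeps $K_m=K\mathbf{1}_{\{2^{m-1}<|y_\perp|\leq 2^m\}}$ intact, notes that $|y_\perp|\leq 2^m$ forces the angular constraint $|y'-x'|\leq 2^{m+1}/|y|$, and writes $T_mf(x)\lesssim |x|^{2-n}e^{-|x|^2}e^{-c2^{2m}}I_m(|x|,x')$ with $I_m(r,x')=\int_0^{2r}\rho^{2n-2}\bigl(\int_{|y'-x'|\leq 2^{m+1}/\rho}f(\rho y')\,dy'\bigr)e^{\rho^2}\,d\rho$; a level-set argument in the radial variable (for each fixed direction $x'$) then exploits that the factor $\rho^{n-1}=|y|^{n-1}$, the polar Jacobian $\rho^{n-1}$, and the cap measure $\approx(2^m/\rho)^{n-1}$ combine to reproduce exactly $\|f\|_{L^1(\gamma_{-1})}$, yielding $\|T_m\|_{L^1\to L^{1,\infty}}\lesssim 2^{m(n-1)}e^{-c2^{2m}}$ before summing with~\eqref{somma1infty}. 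Your treatment of $K^B$ via $|y_\perp|^{n-1}e^{-\delta|y_\perp|^2}\lesssim 1$ and Lemma~\ref{nucleoBS} is correct, but the splitting $|y|^{n-1}\lesssim|y_x|^{n-1}+|y_\perp|^{n-1}$ buys nothing for the main term, since $K^A$ is comparable to $K$ in the small-angle region where all the difficulty sits.
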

\begin{proof}
Let $0\leq f \in L^1(\gamma_{-1})$. For $\underline{0}\neq x\in \R^n$, let $x'= x/|x|$ be its projection on the unit sphere $\mathbb{S}^{n-1}$, which we endow with the standard normalized surface measure $dy'$. Let
\begin{align*}
K_0(x,y)&= K(x,y)\, \mathbf{1}_{\{|y_\perp|\leq 1\}}(x,y)\\
K_m(x,y)&= K(x,y)\, \mathbf{1}_{\{2^{m-1}< |y_\perp|\leq 2^m\}}(x,y), \qquad m=1,2,\dots,
\end{align*}
so that
\begin{equation}\label{eqKm}
K=\sum_{m=0}^\infty K_m,
\end{equation}
and let $T_m$ be the operator whose kernel is $K_m$. Observe that if $|y_\perp| \leq 2^m$, then
\[
 |y'-x'| = 2 \sin (\theta/2) \leq 2\sin \theta \leq \frac{2^{m+1}}{|y|} 
 \]
when $\theta \in [0,\pi/2]$, while if $\theta \in [\pi/2,\pi]$ then
 \[
 |y'+x'| = 2 \sin ((\pi-\theta)/2) \leq 2\sin \theta \leq \frac{2^{m+1}}{|y|} .
 \]
Thus, with a constant $c=c(\delta)>0$
\begin{align*}
T_m f(x) &\lesssim |x|^{2-n} e^{-|x|^2} e^{-c 2^{2m}}\int_{\left\{|y|\leq 2|x|, \; |y'-x'| \wedge |y'+x'|\leq 2^{m+1}/|y| \right\}}|y|^{n-1} f(y) \,e^{|y|^2}\, dy\\
& = |x|^{2-n} e^{-|x|^2} e^{-c 2^{2m}} \int_0^{2|x|} \rho^{2n-2} \left(\int_{\left\{|y'-x'|\wedge |y'+x'|\leq 2^{m+1}/\rho\right\}} f(\rho y')\, dy' \right)e^{\rho^2}\, d\rho\\
&= |x|^{2-n} e^{-|x|^2} e^{-c 2^{2m}} I_m(|x|,x'),
\end{align*}
where the last equality defines $ I_m(|x|,x')$. For  $s>0$ let now $B_{s,m} \subset \mathbb{S}^{n-1}$ be the set of all $x'$ such that the equation
\begin{equation}\label{eqIm}
e^{-c 2^{2m}} r^{2-n} e^{-r^2}I_m(r,x') =s
\end{equation}
admits a positive solution $r$; for $x' \in B_{s,m}$, denote by $r_{s,m}(x')$ the largest solution, which exists since $I_m(r,x')  \lesssim r^{n-1}\| f\|_{L^1(\gamma_{-1})}$. Then
\[
A_{s,m}\coloneqq \{ x\colon T_m f(x)>s\} \subseteq \{ x \colon x'\in B_{s,m},\; |x|\leq r_{s,m}(x')\},
\]
and using~\eqref{eqIm} with $r=r_{s,m}(x')$, we get
\begin{align*}
\gamma_{-1}(A_{s,m})& \lesssim \int_{B_{s,m}} \int_0^{r_{s,m}(x')}r^{n-1}e^{r^2}\, dr \, dx'\\
& \lesssim \int_{B_{s,m}} r_{s,m}(x')^{n-2} e^{ r_{s,m}(x')^2}\, dx'\\
& =\frac{1}{s}\, e^{-c2^{2m}} \int_{B_{s,m}} I_m(r_{s,m}(x'),x')\, dx'\\
&\leq \frac{1}{s}\, e^{-c2^{2m}}  \int_0^\infty \rho^{2n-2} \int_{\mathbb{S}^{n-1}} f(\rho y') \left( \int_{\{|y'-x'|\wedge |y'+x'|<2^{m+1}/\rho\}}\, dx'\right) \, dy' e^{\rho^2}\, d\rho\\
&\lesssim \frac{1}{s}\, e^{-c2^{2m}}  \int_0^\infty \rho^{2n-2}  \left(\frac{2^m}{\rho}\right)^{n-1}\int_{\mathbb{S}^{n-1}} f(\rho y')  \, dy' e^{\rho^2}\, d\rho\\
&\approx \frac{1}{s} \, 2^{m(n-1)} e^{-c2^{2m}} \|f\|_{L^1(\gamma_{-1})}.
\end{align*}
In other words, $\|T_m\|_{L^1(\gamma_{-1})\to L^{1,\infty}(\gamma_{-1})}\lesssim 2^{m(n-1)} e^{-c2^{2m}}$. Because of~\eqref{eqKm} and~\eqref{somma1infty}, the proof is complete.
\end{proof}
We conclude this section with another lemma, which will be involved in the study of the Riesz transforms  of order $2$.
\begin{lemma}\label{BLS}
Let $\delta>0$. The operator with kernel
\[
K(x,y)= e^{-|x|^2+|y|^2} \frac{|x|^{\frac{n+1}{2}}}{|x-y_x|^{\frac{n-1}{2}}}\,  e^{-\delta\frac{|y_\perp|^2 |x|}{|x-y_x|}}\, \mathbf{1}_{\left\{|x| |x-y_x|\geq 1, \; \frac{1}{3}|x|\leq |y_x|<|x| \right\}}(x,y)
\]
is bounded from $L^1(\gamma_{-1})$ to $L^{1,\infty}(\gamma_{-1})$.
\end{lemma}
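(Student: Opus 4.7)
My plan is to mimic the proof of Lemma \ref{nucleo-Hard2}: dyadically decompose the kernel by the ``normalized'' perpendicular size, bound each piece by a level-set analysis in spherical coordinates, and sum via \eqref{somma1infty}.

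First, split $K = K^+ + K^-$ according to the sign of $r_0$. In the antiparallel case $r_0 \in (-1, -1/3]$, $|x-y_x| = (1-r_0)|x|$ is comparable to $|x|$, and from $|x||x-y_x|\geq 1$ one has $|x|\geq \sqrt{3/2}$, so
$K^-(x,y)\lesssim e^{-|x|^2+|y|^2}\,|x|\,e^{-\delta'|y_\perp|^2}$ for some $\delta' > 0$, which can be handled by a direct adaptation of the proof of Lemma \ref{nucleo-Hard2}. I focus on the parallel case $r_0 \in [1/3, 1)$. Setting $\eta := 1-r_0 \in [1/|x|^2, 2/3]$ and $u := y_\perp/\sqrt\eta \in x^\perp$, so that $y = (1-\eta)x+\sqrt\eta u$ and $dy = |x|\eta^{(n-1)/2}\,d\eta\,du$, a direct computation yields
\[
T^+f(x) = |x|^2 \int_{1/|x|^2}^{2/3}\!\!\int_{u \in x^\perp} e^{-\eta(2-\eta)|x|^2 + (\eta-\delta)|u|^2}\, f\big((1-\eta)x+\sqrt\eta u\big)\,du\,d\eta.
\]
I then decompose $K^+ = \sum_{m\geq 0} K^+_m$, with $K^+_m$ supported on $\{|u|^2 \in (2^{m-1}, 2^m]\}$ for $m\geq 1$ and $\{|u|^2\leq 1\}$ for $m=0$; on the support of $K^+_m$ with $m\geq 1$ one has $e^{-\delta|u|^2}\leq e^{-\delta\, 2^{m-1}}$.

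For each $m$ I aim to prove $\|T^+_m\|_{L^1(\gamma_{-1})\to L^{1,\infty}(\gamma_{-1})} \lesssim 2^{Am}\,e^{-c 2^m}$. Since on the support of $K^+_m$ the angle between $y'$ and $x'$ satisfies $|y'-x'|\lesssim 2^{m/2}/|x|$ (from $|y_\perp|\leq 2^{m/2}\sqrt{1-r_0}$ combined with $|y|\geq |y_x|\geq |x|/3$), one obtains the pointwise bound
\[
T^+_m f(x)\lesssim e^{-\delta 2^{m-1}}\, e^{-|x|^2}\,|x|^n\, I^+_m(|x|,x'),
\]
with $I^+_m(r,x') := \int_{r/3}^{2r} \rho^{n-1} e^{\rho^2}\!\int_{|y'-x'|\lesssim 2^{m/2}/r}\! f(\rho y')\,dy'\,d\rho$. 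Then, following the proof of Lemma \ref{nucleo-Hard2}, I pass to spherical coordinates $x = rx'$, define $r_{t,m}(x')$ as the largest $r$ with $T^+_m f(rx') > t$, and bound $\gamma_{-1}\{T^+_m f > t\} \lesssim \int_{\mathbb{S}^{n-1}} r_{t,m}(x')^{n-2}\, e^{r_{t,m}(x')^2}\,dx'$ (for $r_{t,m}\geq 1$); use the defining equation at $r = r_{t,m}(x')$ to replace $e^{r^2}$ in terms of $I^+_m$, and apply Fubini to swap the $x'$-integration with the inner $y$-integral in $I^+_m$, picking up the angular-cap measure $\approx (2^{m/2}/\rho)^{n-1}$. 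This yields the claimed bound, which is summable over $m$ via \eqref{somma1infty}.

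The main obstacle is the Fubini exchange in the level-set argument: one must refine the pointwise bound on $T^+_m f$ in the low-$|x|$ region (roughly $|x|^2 \lesssim 2^m$) by using the sharper support estimate $|x-y_x| \geq |y_\perp|^2|x|/2^m$ on $E_m^+$ in place of the universal bound $|x-y_x| \geq 1/|x|$, so that after the Fubini step the exponential decay $e^{-c 2^m}$ is preserved uniformly in $|x|$ and no spurious factor of $|y|^{n-1}$ slips into the $L^1(\gamma_{-1})$-bound on $f$.
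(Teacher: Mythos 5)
The core of your proposal --- the passage to the coordinates $(\eta,u)$ with $\eta=1-r_0$ and $u=y_\perp/\sqrt{\eta}$, and the dyadic decomposition in $|u|^2=|y_\perp|^2|x|/|x-y_x|$ --- matches the paper's setup (its sets $E_m$), and your computation of $T^+f$ is correct. But the level-set argument modelled on Lemma~\ref{nucleo-Hard2} does not close, and the obstacle you flag at the end is fatal rather than a technicality. That argument requires the kernel to majorize by a \emph{product} $\phi(|x|)\,\Psi(y,x')\,\mathbf{1}_{\Omega}$ with $\Omega$ monotone in $|x|$, so that the largest-solution trick plus Fubini returns exactly $\int \rho^{n-1}e^{\rho^2}\!\int f(\rho y')\,dy'\,d\rho\approx\|f\|_{L^1(\gamma_{-1})}$. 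The factor $|x-y_x|^{-(n-1)/2}=(|x|-|y_x|)^{-(n-1)/2}$ couples $|x|$ and $|y|$, and every decoupled majorization loses a power: your bound $|x-y_x|\geq 1/|x|$ gives $|x|^n$ and, as you note, an excess $\rho^{n-1}$ after Fubini; your proposed fix $|x-y_x|\geq |y_\perp|^2|x|/2^m$ gives $2^{m(n-1)/2}|x|\,|y_\perp|^{1-n}$, whose angular integral $\int_{|y'-x'|\leq\epsilon}|y'-x'|^{1-n}\,dx'\approx\int_0^\epsilon s^{-1}\,ds$ diverges for $n\geq2$; and even the optimal interpolant, namely $|x-y_x|\geq\sqrt{(1/|x|)\cdot(|y_\perp|^2|x|/2^m)}=2^{-m/2}|y_\perp|$, yields $2^{m(n-1)/4}|x|^{(n+1)/2}|y_\perp|^{-(n-1)/2}$, for which the same bookkeeping ($r^{n-2}e^{r^2}=C_m t^{-1}r^{(3n-3)/2}I$, cap measure $(2^{m/2}/\rho)^{(n-1)/2}$ against the singularity $|y'-x'|^{-(n-1)/2}$) still leaves an uncontrolled excess $\rho^{(n-1)/2}$. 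So no choice of pointwise majorant within this scheme balances the powers for $n\geq 2$; already the $m=0$ piece, supported in the parabolic region $|y_\perp|^2\lesssim(|x|-|y_x|)/|x|$, defeats it.

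This is precisely why the paper does something genuinely different here: after reducing to a sector and to $|y_\perp|^2|x|/|x-y_x|\leq\frac14|x|^2$, it introduces the variables $\xi_1=\frac12|x|^2$, $\xi_j=2^{-m/2+2k}x_j/|x|$ (and likewise for $y$), under which the kernel becomes, up to the factor $e^{-2^{m-1}}2^{\frac{m}{2}(n-1)}$, the parabolic kernel $e^{-2\xi_1}(\xi_1-\eta_1)^{\frac{1-n}{2}}\mathbf{1}_{\{|\xi'-\eta'|\lesssim\sqrt{\xi_1-\eta_1},\,\eta_1<\xi_1-c\}}$, and then invokes the weak type $(1,1)$ estimate of \cite[Proposition~8]{LiSjogren} for that operator. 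That input is not reproducible by a radial level-set computation (the operator it controls is not even bounded on $L^1$), so your sketch is missing its essential ingredient. If you want to complete the proof, you should either perform this change of variables and quote the Li--Sj\"ogren result, or supply an independent proof of the weak type $(1,1)$ of the parabolic kernel above.
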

\begin{proof}
We begin with a series of observations that allow us to make some restrictions. 
\begin{itemize}
\item If $n=1$, then $K(x,y)\leq  e^{-|x|^2+|y|^2}|x|$, and the statement follows from Lemma~\ref{nucleoBS}.

\item If $|y_\perp|>|x|/2$, then
\begin{align*}
K(x,y)
&\lesssim \e^{-|x|^2+|y|^2}\frac{|x|^{\frac{n+1}{2}} }{|x-y_x|^{\frac{n-1}{2}}} \left(\frac{|x-y_x|}{|y_\perp|^2|x|}\right)^{\frac{n-1}{2}} \lesssim e^{-|x|^2+|y|^2} |x|^{2-n},
\end{align*}
and the statement follows again from Lemma~\ref{nucleoBS}.

\item If $|y_\perp|\leq |x|/2$ and $\theta> \pi/6$, so that $|x-y_x|\approx |x|$, then
\begin{align*}
K(x,y)
&\leq  e^{-|x|^2+|y|^2}|x| \,e^{-c |y_\perp|^2} \mathbf{1}_{G\cap \left\{\frac{1}{3} |x|\leq |y|\leq 2|x|\right\}}(x,y),
\end{align*}
and we can apply Lemma~\ref{nucleo-Hard2}.

\item If $\frac{|y_\perp|^2 |x|}{|x-y_x|} >\frac{1}{4}|x|^2$, then
\begin{align*}
K(x,y) 
& \lesssim e^{-|x|^2+|y|^2}  \frac{|x|^n}{(|x||x-y_x|)^{\frac{n-1}{2}}} \, e^{-c|x|^2}\mathbf{1}_{\left\{|x| |x-y_x|\geq 1 \right\}}(x,y)  \lesssim  e^{-|x|^2+|y|^2} |x|^{-n},
\end{align*}
and the conclusion follows from Lemma~\ref{nucleoBS}.
\item By means of a rotation, we can assume that $x$ is in the sector defined by $x_1/|x|>\sqrt{3}/2$, that is, the angle between $x$ and the positive first coordinate axis is less than $\pi/6$. Observe that if $\theta\leq \pi/6$, then $y_1/|y|>1/2$.
\end{itemize}

Summing up all this, if $n\geq 2$ and we let
\begin{align*}
\Omega &=\Bigg\{(x,y)\in \R^n \times \R^n \colon \: \frac{x_1}{|x|}>\frac{\sqrt{3}}{2}, \: \frac{y_1}{|y|}>\frac{1}{2}, \: |x||x-y_x|\geq 1, \: \frac{1}{3}|x|\leq |y_x|< |x|,\:  \\ & \hspace{8cm}  |y_\perp|\leq |x|, \:  \theta \in [0,\pi/6], \: \frac{|y_\perp|^2 |x|}{|x-y_x|} \leq\frac{1}{4}|x|^2 \Bigg\},
\end{align*}
then it is enough to prove the boundedness of the operator $\widetilde T$ whose kernel is
\[
\widetilde K(x,y)= e^{-|x|^2} \frac{|x|^{\frac{n+1}{2}}}{|x-y_x|^{\frac{n-1}{2}}} \, e^{-\frac{|y_\perp|^2 |x|}{|x-y_x|}}\, \mathbf{1}_{\Omega}(x,y),
\]
from $L^1(\mathrm{Leb})$ to $L^{1,\infty}(\gamma_{-1})$. We observe that $\Omega\subset G$ and that $(x,y)\in \Omega$ implies $|y|\approx |x|$, so that $x$ and $y$ stay away from the origin.

For $(x,y)\in \Omega$, we have $|x|-|y_x| = |x-y_x|$, and it follows that
\[
|x|-|y| =  \frac{|x|^2-|y|^2}{|x|+|y|}=  \frac{(|x|-|y_x|)(|x|+|y_x|)- |y_\perp|^2}{|x|+|y|} \geq |x-y_x| \frac{|x|+|y_x|-|x|/4}{|x|+|y|} \gtrsim |x-y_x|;
\]
to estimate $|y_\perp|^2$ here, we used the last inequality in the definition of $\Omega$. In particular, $|y|<|x|$. Since also
\[
|x|-|y| \leq |x|-|y_x|= |x-y_x|,
\]
we obtain 
\begin{equation}\label{x-yapprox}
|x|-|y| \approx |x-y_x|.
\end{equation}
 Therefore, there exists $c>0$ such that
\[
\Omega  \subseteq \left\{(x,y)\colon |x|-|y|\geq \frac{c}{|x|}, \; \frac{1}{3}|x|\leq |y_x|, \; |y|<|x|, \; \theta \in [0,\pi/6], \; \frac{|y_\perp|^2 |x|}{|x-y_x|} \leq\frac{1}{4}|x|^2 \right\}.
\]
Let
\begin{align*}
E_0 = \left\{ (x,y)\colon \frac{|y_\perp|^2 |x|}{|x-y_x|}\leq 1\right\}  \quad \mbox{and} \quad  E_m= \left\{(x,y)\colon 2^{m-1}< \frac{|y_\perp|^2 |x|}{|x-y_x|} \leq 2^m\right\}, \quad m=1,2,\dots
\end{align*}
Set also
\[A_0 = \{x\colon |x|\leq 1 \} \quad \mbox{and} \quad  A_k = \{x \colon 2^{k-1} \leq |x|\leq 2^k \}, \quad k=1,2,\dots\]
We define
\[
\widetilde K_{m,k}(x,y)= \widetilde K(x,y)  \mathbf{1}_{E_m}(x,y) \mathbf{1}_{A_k}(x),
\]
so that
\[
\widetilde K = \sum_{m=0}^\infty \sum_{k=0}^\infty \widetilde K_{m,k}.
\]
Observe that if $(x,y)\in \Omega$ and $x\in A_k$, then $y\in A_{k-2} \cup A_{k-1}\cup A_k$. For $(x,y)\in \Omega \cap E_m$ and $x\in A_k$, we make the transformation
\begin{align}\label{trasfxi}
\xi_1 = \frac{1}{2}|x|^2, \qquad & \xi_j =2^{-m/2 + 2k} \, \frac{x_j}{|x|},\quad j=2,\dots, n,
\end{align}
and analogously
\begin{align}\label{trasfeta}
\eta_1 = \frac{1}{2}|y|^2, \qquad & \eta_j =2^{-m/2+2k}\, \frac{y_j}{|y|},\quad j=2,\dots, n.
\end{align}
Going via the coordinates  $(\xi_1, x_2,\dots, x_n)$, one finds that the Jacobian of the transformation~\eqref{trasfxi} is comparable to $x_1\, 2^{-\frac{m}{2}(n-1)} \,2^{2k(n-1)}\, |x|^{1-n} > 0$, and similarly for~\eqref{trasfeta}. This leads to
\[
\dd x \approx 2^{\frac{m}{2}(n-1)} 2^{-nk} \, \dd \xi, \qquad \dd y \approx 2^{\frac{m}{2}(n-1)} 2^{-nk} \, \dd \eta.
\]
Observe that $|x|-|y|\geq c/|x| $ implies
\[
\xi_1 -\eta_1 = \frac{1}{2}(|x| + |y|)(|x|-|y|) \gtrsim c,
\]
and that
\begin{align*}
|x|-|y| = \sqrt{2} \, (\sqrt{\xi_1} -\sqrt{\eta_1}) \approx \frac{\xi_1 - \eta_1}{\sqrt{\xi_1}} \approx \frac{\xi_1 - \eta_1}{|x|}.
\end{align*}
Since $(x,y)\in \Omega \cap E_m$ and $x\in A_k$, we get for $j= 2,\dots, n$, using also~\eqref{x-yapprox},
\begin{align*}
|\xi_j-\eta_j| 
&\lesssim 2^{-\frac{m}{2} +2k} \sin \theta  = 2^{-\frac{m}{2} +2k} \frac{|y_\perp|}{|y|} \lesssim 2^{k} \sqrt{\frac{|x|-|y|}{|x|}}  \approx  \sqrt{|x|(|x|-|y|)} \approx \sqrt{\xi_1 -\eta_1} .
\end{align*}
In other words, if we write $\xi'$ for $(\xi_2, \dots, \xi_n)$, and similarly for $\eta'$, we obtain $|\xi'-\eta'|\lesssim \sqrt{\xi_1-\eta_1}$. 

These transformations lead us to the operator
\[
\mathcal{T}_{m,k}\varphi(\xi)= \int_{\R^n} \mathcal{K}_{m,k}(\xi,\eta) \varphi(\eta)\, \dd \eta,
\]
where 
\[
\mathcal{K}_{m,k}(\xi,\eta) =\mathbf{1}_{[2^{2k-3} ,2^{2k-1} ]}(\xi_1)\, e^{-2^{m-1}} 2^{\frac{m}{2}(n-1)} \,e^{-2\xi_1}(\xi_1 -\eta_1)^{\frac{1-n}{2}}\, \mathbf{1}_{\left\{|\xi'-\eta'|\lesssim \sqrt{\xi_1-\eta_1}, \; \frac{\xi_1}{9}< \eta_1 < \xi_1-c \right\} }(\xi,\eta).
\]
Indeed, the reader may verify that the operator with kernel $\widetilde K_{m,k}$ is bounded from $L^1(\mathrm{Leb})$ to $L^{1,\infty}(\gamma_{-1})$ if $\mathcal{T}_{m,k}$ is bounded from $L^1(\dd \eta)$ to $L^{1,\infty}(e^{2\xi_1}\, \dd \xi)$, with uniform control in $m$ and $k$ of the quotients between the operator quasi-norms. We thus verify that $\mathcal{T}_{m,k}$ has this boundedness property, uniformly in $m$ and $k$.

By~\cite[Proposition 8]{LiSjogren}, $\mathcal{T}_{m,k}$ is bounded from $L^1(\dd \eta)$ to $L^{1,\infty}(e^{2\xi_1}\, \dd \xi) $, with norm $\lesssim e^{-2^{m-1}} 2^{\frac{n-1}{2}m} $ uniformly in $k$. A similar estimate thus holds for the norm from $L^1(\mathrm{Leb})$ to $ L^{1,\infty}(\gamma_{-1})$ of the operator whose kernel is $\widetilde K_{m,k}$.

Since $y\in A_{k-2} \cup A_{k-1}\cup A_k$ if  $x\in A_k$ and $(x,y)\in \Omega$, we can sum over $k$ and conclude that the operator with kernel  $\widetilde K \mathbf{1}_{E_m}$ is bounded from $L^1(\mathrm{Leb})$ to $L^{1,\infty}(\gamma_{-1})$ with norm controlled by $e^{-2^{m-1}} 2^{\frac{n-1}{2}m}$. Finally, the proof is completed with a summation in $m$ and use of~\eqref{somma1infty}.
\end{proof}

\section{The Global Part of the Riesz trasforms}\label{Sec:global}
\begin{proposition}\label{Prop_glob}
If $|\alpha|\in \{1,2\}$, then $\Rs_{\alpha,\glob}$ is bounded from $L^1(\gamma_{-1})$ to $L^{1,\infty}(\gamma_{-1})$.
\end{proposition}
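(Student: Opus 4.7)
The plan is to pointwise dominate $|K_{\Rs_\alpha,\glob}(x,y)|$ by a finite sum of kernels that fall within the scope of Lemmas~\ref{nucleoSalogni}--\ref{BLS}, after which the weak type $(1,1)$ estimate follows from those lemmas and the series bound~\eqref{somma1infty}. Starting from the representation~\eqref{kernelRalpha} and writing $u=(x-ry)/\sqrt{1-r^2}$, the standard bound $|H_\alpha(u)|\lesssim\sum_{a=0}^{|\alpha|}|u|^a$ combined with $|u|^a=|x-ry|^a(1-r^2)^{-a/2}$, the pointwise inequality $|x-ry|\leq|x-y_x|+(1-r)|y_x|+r|y_\perp|$ from~\eqref{x-ry}, and the identity $|rx-y|^2=(r-r_0)^2|x|^2+|y_\perp|^2$ from~\eqref{rx-y}, reduces the task to estimating, for $a\in\{0,\dots,|\alpha|\}$, integrals of the form
\[
J_a(x,y) = e^{-|x|^2+|y|^2}\int_0^1\frac{r^{n-1}(-\log r)^{|\alpha|/2-1}|x-ry|^a}{(1-r^2)^{(n+|\alpha|+a)/2}}\,e^{-\frac{(r-r_0)^2|x|^2+|y_\perp|^2}{1-r^2}}\,dr,
\]
with $r_0=(x\cdot y)/|x|^2$.

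I would split the $r$-integral at $r=1/2$. The contribution on $(0,1/2)$ is comparatively direct: $(1-r^2)^{-1}$ is bounded there, and a case distinction on $|y|\leq 2|x|$ versus $|y|>2|x|$, together with the Gaussian decay in $|rx-y|$, gives a bound of the form $e^{-|x|^2+|y|^2}|x|^{-\mu}(1+|x|)^{-\nu}$ with parameters admissible in Lemma~\ref{nucleoBS}. For the principal contribution on $(1/2,1)$, I would decompose the global region into three subcases according to the location of $r_0$: when $r_0\leq 0$ or $r_0\geq 1$ there is no interior critical point, so a boundary estimate produces bounds dominated by Lemma~\ref{nucleoSalogni} or~\ref{nucleo-Hard2}; when $r_0\in(0,1/3)$ the factor $1-r^2$ stays bounded below near the critical point, so a Laplace estimate produces a kernel of the form in Lemma~\ref{nucleo-Hard2}.

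The essential subcase is $r_0\in[1/3,1)$, which by~\eqref{1-r0} corresponds to $\tfrac{1}{3}|x|\leq|y_x|<|x|$ and places us in the regime of Lemma~\ref{BLS}. Here the Laplace method around $r=r_0$, on the natural scale $\sqrt{1-r_0}/|x|$ with $1-r^2\approx 2(1-r_0)=2|x-y_x|/|x|$, yields for $J_a$ a bound of the shape
\[
e^{-|x|^2+|y|^2}\cdot\frac{(1-r_0)^{(1-n-|\alpha|-a)/2}}{|x|}\,|x-r_0 y|^a\,e^{-c|y_\perp|^2/(1-r_0)}.
\]
Applying $|x-r_0y|^a\lesssim |x-y_x|^a+|y_\perp|^a$, rewriting $1-r_0=|x-y_x|/|x|$, and absorbing stray $|y_\perp|^j$ factors into the Gaussian via $|y_\perp|^j e^{-c|y_\perp|^2|x|/|x-y_x|}\lesssim (|x-y_x|/|x|)^{j/2}$, each resulting term is compared directly with the kernel of Lemma~\ref{BLS}; the global-region conditions $|x||x-y_x|\gtrsim 1$ and $|x-y_x|\leq|x|$ (valid in this subcase) are used to absorb lower powers of $|x|$ and of $|x-y_x|$ against those appearing in Lemma~\ref{BLS}. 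Peripheral ranges such as $|y_\perp|^2|x|/|x-y_x|>|x|^2/4$, or $r_0$ extremely close to $1$, are handled separately by crude exponential decay and reduce to Lemma~\ref{nucleoBS}.

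The technical heart, and the place where the constraint $|\alpha|\leq 2$ becomes essential, is the Laplace moment computation just above. Taking $a=|\alpha|$ together with the monomial $|x-y_x|^{|\alpha|}=|x|^{|\alpha|}(1-r_0)^{|\alpha|}$ produces, after the Laplace integration, the factor $|x|^{(2|\alpha|+n-3)/2}/|x-y_x|^{(n-1)/2}$. This matches exactly the factor $|x|^{(n+1)/2}/|x-y_x|^{(n-1)/2}$ in the kernel of Lemma~\ref{BLS} when $|\alpha|=2$, is strictly smaller (by one power of $|x|$) when $|\alpha|=1$, and is strictly larger when $|\alpha|\geq 3$. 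This explains why the scheme above succeeds for $|\alpha|\leq 2$ and must fail for $|\alpha|\geq 3$, consistently with the counterexample in Section~\ref{Sec:3oltre}.
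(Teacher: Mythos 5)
Your proposal follows essentially the same route as the paper: the same Hermite-polynomial domination reducing to the integrals $J_a$, the same split at $r=1/2$, the same case analysis on $r_0$ with a Laplace-type estimate at the critical point $r=r_0$ in the regime $\tfrac13|x|\le|y_x|<|x|$, and the same identification that the worst term ($a=|\alpha|=2$) produces exactly the kernel of Lemma~\ref{BLS}, with Lemmas~\ref{nucleoSalogni}--\ref{nucleo-Hard2} absorbing the remaining pieces. Two bookkeeping slips would need repair in a full write-up: the $(0,1/2)$ contribution is \emph{not} dominated by an admissible kernel of Lemma~\ref{nucleoBS} when $n\ge 2$ and $a=2$ (one is left with a term of the shape $e^{-|x|^2+|y|^2}e^{-|y_\perp|^2}|x|\,(|y|/|x|)^{n-1}$, which is precisely what Lemma~\ref{nucleo-Hard2} is for); and the peripheral ranges in the critical case (where $|r-r_0|\gtrsim 1-r_0$ or $1-r\ll 1-r_0$) yield bounds like $(|x-y_x|+|y_\perp|)^{-n}$ that require Lemma~\ref{nucleoSalogni}, not Lemma~\ref{nucleoBS}. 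Also, your Laplace exponent drops the $(-\log r)^{|\alpha|/2-1}\approx(1-r_0)^{|\alpha|/2-1}$ factor, so it is exact only for $|\alpha|=2$; for $|\alpha|=1$ the kernel is a half power of $1-r_0$ more singular than you state, though still within reach of Lemma~\ref{nucleoSalogni} since then $a\le 1$.
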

\begin{proof}
Let $(x,y)\in G$. By~\eqref{kernelRalpha},
\begin{align*}
|K_{\Rs_\alpha}(x,y)|&\leq c(|\alpha|) e^{-|x|^2+|y|^2} \int_0^1 \frac{r^{n-1}(-\log r)^{\frac{|\alpha|}{2}-1}}{(1-r^{2})^{\frac{n+|\alpha|}{2}}}\left|H_\alpha\left(\frac{x-ry}{\sqrt{1-r^2}}\right) \right|e^{-\frac{|rx-y|^2}{1-r^2}}\, dr \\
& \lesssim e^{-|x|^2+|y|^2} \sum_{a=0}^{|\alpha|} \left( \int_0^{1/2} r^{n-1}|x-ry|^a e^{-{|rx-y|^2}}\, \dd r + \int_{1/2}^1  \frac{|x-ry|^a}{(1-r)^{\frac{n+2+a}{2}}}e^{-\frac{|rx-y|^2}{1-r^2}}\, \dd r\right)\\& = e^{-|x|^2+|y|^2}\sum_{a=0}^{|\alpha|} 	\left(K^a_1(x,y) + K^a_2(x,y)\right),
\end{align*}
say. We separate the analyses of $K_1^a$ and $K_2^a$.

\smallskip

\textit{Step 1}. We prove that $e^{-|x|^2+|y|^2} K_1^a(x,y)$ is the kernel of an operator of weak type $(1,1)$.  Since
\begin{align*}
K_1^a(x,y)  \lesssim \int_0^{1/2} r^{n-1}(|x|^a+|y|^a) e^{-{|rx-y|^2}}\, \dd r,
\end{align*}
we consider two cases, depending on the values of the quotient $|y|/|x|$.

First, suppose $|y|/|x|\geq 2$. Then $|rx-y| >3|y|/4$ for $0<r<1/2$, so that
\begin{align*}
K^a_1(x,y) 
&\lesssim \int_0^{1/2} r^{n-1}|y|^a e^{-c|y|^2}\, \dd r \lesssim  |y|^a e^{-c|y|^2} \lesssim |y|^{1-n} \lesssim |x|^{1-n},
\end{align*}
and Lemma~\ref{nucleoBS} applies.

We now consider the case $|y|/|x|<2$. Then 
\begin{align*}
K^a_1(x,y)&\lesssim e^{-|y_\perp|^2} \int_0^{1/2} \left[|r_0|^{n-1} + |r-r_0|^{n-1}\right] |x|^a e^{-|r-r_0|^2|x|^2}\, \dd r\\
&\lesssim e^{-|y_\perp|^2} \bigg[\left(\frac{|y|}{|x|}\right)^{n-1} |x|^a\int_0^{1/2} e^{-|r-r_0|^2|x|^2}\, \dd r + |x|^{a}\int_0^{1/2}|r-r_0|^{n-1} e^{-|r-r_0|^2|x|^2}\, \dd r \bigg]\\
& \lesssim e^{-|y_\perp|^2}|x|^{a-1} \left(\frac{|y|}{|x|}\right)^{n-1} +|x|^{a-n}.
\end{align*}
Since $|x|\geq c $ for $|x|>|y|/2$ and $(x,y)\in G$, and also $a \leq 2$, we can apply Lemmata~\ref{nucleo-Hard2} and~\ref{nucleoBS} and complete Step 1.

\smallskip

\textit{Step 2}. We prove that $e^{-|x|^2+|y|^2}K_2^a(x,y)$ is the kernel of an operator of weak type $(1,1)$.

By~\eqref{rx-y},
\begin{align*}
K^a_2(x,y)
&= \int_{1/2}^1  \frac{|x-ry|^a}{(1-r)^{\frac{n+a+2}{2}}}e^{-c\frac{(r-r_0)^2|x|^2 +|y_\perp|^2}{1-r}}\, \dd r\\
& = \int_{1/2}^1  \left( \textbf{1}_{\{ r_0 \leq 1/3\}} + \textbf{1}_{\{ r_0 \geq 2\}} + \textbf{1}_{\{1/3< r_0 <2\}}\right)(x,y) \frac{|x-ry|^a}{(1-r)^{\frac{n+a+2}{2}}}e^{-c\frac{(r-r_0)^2|x|^2 +|y_\perp|^2}{1-r}}\, \dd r\\
& \eqqcolon K_{2,1}^a(x,y) + K_{2,2}^a(x,y) +  K_{2,3}^a(x,y).
\end{align*}
We prove separately the weak type $(1,1)$ of the operators associated to the kernels $e^{-|x|^2+|y|^2}K_{2,i}^a(x,y)$, $i=1,2,3$.

\smallskip

\noindent \textit{2.1}. If $r_0\leq 1/3$ and $1/2\leq r\leq 1$, then 
\begin{equation}\label{211}
|r-r_0|\approx 1+|r_0|.
\end{equation}
Moreover,  
\begin{equation}\label{212}
 |x-ry|\lesssim (1+|r_0|)|x| +|y_\perp|.
\end{equation}
Therefore, with the change of variables $\frac{(1+|r_0|)^2|x|^2 +|y_\perp|^2}{1-r}=s$,
\begin{align*}
K^a_{2,1}(x,y)&\lesssim  \left[(1+|r_0|)|x| +|y_\perp|\right]^a \int_{1/2}^1 \frac{1}{(1-r)^{\frac{n+a+2}{2}}} e^{-c \frac{(1+|r_0|)^2|x|^2 +|y_\perp|^2}{1-r}}\, \dd r\\& \lesssim \left[(1+|r_0|)^2|x|^2+|y_\perp|^2\right]^{-\frac{n}{2}}	\\
& \lesssim |x|^{-n}.
\end{align*}
Thus, the operator with kernel $e^{-|x|^2+|y|^2}K_{2,1}^a(x,y)$ is of weak type $(1,1)$ by Lemma~\ref{nucleoBS}.

\smallskip

\noindent \textit{2.2}. If $r_0\geq 2$, then~\eqref{211} and~\eqref{212} remain valid if $1/2\leq r\leq 1$ and we can argue as in the preceding case.

\smallskip

\noindent \textit{2.3}. We split the integral defining $K_{2,3}^a$ in a way that depends on the value of $r_0$. Let
\begin{align*}
I_1&= [\tfrac{1}{2},1)\cap \left\{r\colon 1-r>\tfrac{3}{2}|1-r_0| \right\},\\ \qquad I_2 &= [\tfrac{1}{2},1)\cap\left\{r\colon 1-r\leq \tfrac{1}{2}(1-r_0) \vee \tfrac{3}{2} (r_0-1) \right\}, \\
I_3& =[\tfrac{1}{2},1)\cap\left\{ r\colon  |r-r_0|<\tfrac{1}{2}(1-r_0)\right\},
\end{align*}
and for $j=1,2,3$ define
\begin{align*}
K_{2,3,j}^a (x,y) =\textbf{1}_{\{1/3< r_0 <2\}}(x,y) \int_{I_j }\frac{|x-ry|^a}{(1-r)^{\frac{n+a+2}{2}}}e^{-c\frac{(r-r_0)^2|x|^2 +|y_\perp|^2}{1-r}}\, \dd r.
\end{align*}
Thus, $K_{2,3}^a = K_{2,3,1}^a + K_{2,3,2}^a+K_{2,3,3}^a$. It will be useful to observe that, since $r_0 \approx 1$, or equivalently  $|y_x| \approx |x|$, and $\theta<\pi/6$, one has
\[
{|y_\perp|^{-n}} \lesssim (|x|\sin\theta)^{-n}.
\]

\smallskip

We first consider $K^a_{2,3,1}$.  Since
\[
1-r - |r_0-1| \leq r_0-r \leq 1-r + |r_0-1|,
\]
the condition $1-r>\frac{3}{2}|1-r_0|$ implies $|r-r_0|\approx 1-r$. Thus,
\[
|x|^2\,\frac{(r-r_0)^2}{1-r}\approx |x|^2(1-r),
\]
and~\eqref{x-ry} implies
\begin{align*}
|x-ry|  &\leq  |1-r_0| |x|+ (1-r)|y_x| +|y_\perp| \lesssim (1-r)|x| +|y_\perp|  .
\end{align*}
We then get
\begin{align*}
K^a_{2,3,1}(x,y) &\lesssim \int_{1-r>3|1-r_0|/2} (1-r)^{-\frac{n+2}{2}} |x|^a(1-r)^{\frac{a}{2}} e^{-c |x|^2(1-r)} e^{-c\frac{|y_\perp|^2}{1-r}}\, \dd r \\ & \qquad \qquad \qquad + \int_{1-r>3|1-r_0|/2} (1-r)^{-\frac{n+2}{2}} \frac{|y_\perp|^a}{(1-r)^{\frac{a}{2}}} e^{-c |x|^2(1-r)} e^{-c\frac{|y_\perp|^2}{1-r}}\, \dd r \\& \lesssim  \int_{1-r>3|1-r_0|/2} (1-r)^{-\frac{n+2}{2}} e^{-c |x|^2(1-r)} e^{-c\frac{|y_\perp|^2}{1-r}} \, \dd r.
\end{align*}
By the change of variables $|y_\perp|^2/(1-r)=s$, one obtains
\[
K^a_{2,3,1}(x,y)  \lesssim    \frac{1}{|y_\perp|^n}.
\]
We also have
\[
K^a_{2,3,1}(x,y)\lesssim   \int_{1-r>3|1-r_0|/2} (1-r)^{-\frac{n+2}{2}} \, \dd r
 \, \lesssim |1-r_0|^{-\frac{n}{2}}
\approx |x|^{\frac{n}{2}}\,
|x-y_x|^{-\frac{n}{2}},
\]
in view of \eqref{1-r0}. Since $(1+|x|)|x-y_x|\gtrsim 1$ if $|x-y_x|>|y_\perp|$, while $(1+|x|)|y_\perp|\gtrsim 1$ if $|x-y_x|\leq |y_\perp|$ (recall $(x,y)\in G$), we obtain
\[
K^a_{2,3,1}(x,y)\lesssim  (1+|x|)^n.
\]
In other words,
\[
K^a_{2,3,1}(x,y)  \lesssim (1+|x|)^n \wedge{(|x|\sin\theta)^{-n}},
\]
and so the operator with kernel $e^{-|x|^2+|y|^2}K^a_{2,3,1}(x,y)$ is of weak type $(1,1)$ by Lemma~\ref{nucleoSalogni}.

\smallskip

We now consider $K^a_{2,3,2}$.  Here $|1-r_0|\approx |r-r_0|$, since
\[
|r-r_0|=|1-r_0 -(1-r)|\leq |1-r_0|+(1-r)\lesssim |1-r_0|.
\]
If $r_0<1$, then $1-r<\frac{1}{2}(1-r_0)$ and hence
\[
|r-r_0|=|1-r_0 -(1-r)|\geq |1-r_0|-|1-r|\geq \frac{1}{2}|1-r_0|.
\]
If $r_0>1$, then
\[
|r-r_0|=r_0-r >r_0-1 = |1-r_0|.
\]
In both cases, by~\eqref{x-ry},
\begin{align*}
|x-ry| \lesssim |x-y_x|+ |y_\perp|
\end{align*}
since $1-r\lesssim|1-r_0|$, $|y_x|\approx |x|$ and by~\eqref{1-r0}. Thus
\begin{align*}
K^a_{2,3,2}(x,y) &\lesssim (|x-y_x| + |y_\perp|)^{a} \int_0^1(1-r)^{-\frac{n+a+2}{2}} e^{-c\frac{|x-y_x|^2 + |y_\perp|^2}{1-r}}\, \dd r.
\end{align*}
After the change of variables $\frac{|x-y_x|^2 + |y_\perp|^2}{1-r}=s$, we obtain
\begin{align*}
K^a_{2,3,2}(x,y)&  \lesssim  (|x-y_x| + |y_\perp|)^{-n}
 \leq \frac{1}{|x-y|^n} \wedge \frac{1}{|y_\perp|^n} \lesssim (1+|x|)^n \wedge \frac{1}{(|x|\sin \theta)^n}.
\end{align*}
Lemma~\ref{nucleoSalogni} now ends the case of $K^a_{2,3,2}$.

\smallskip

It remains to consider $K^a_{2,3,3}$. Observe first that $K^a_{2,3,3} = K^a_{2,3,3} \mathbf{1}_{\{ 1/3<r_0<1\} }$. Then, the conditions $1/3<r_0<1$ and $|r-r_0|<\frac{1}{2}(1-r_0)$ imply $1-r \approx 1-r_0$. Moreover, by~\eqref{x-ry}, the fact that $|y_x|\approx |x|$ and~\eqref{1-r0} we get
\begin{align*}
|x-r y|& \lesssim (1-r_0)|x| + |y_\perp|.
\end{align*}
Therefore
\begin{align*}
K^a_{2,3,3} (x,y)&\lesssim (1-r_0)^{-\frac{n+a+2}{2}}\left[(1-r_0)|x|+|y_\perp|\right]^a \int_{|r-r_0|<\frac{1-r_0}{2}} e^{-c|x|^2 \frac{(r-r_0)^2}{1-r_0} -c|y_\perp|^2\frac{1}{1-r_0}}\, \dd r\\
& \lesssim \left[ (1-r_0)^{-\frac{n-a+2}{2}}|x|^a + (1-r_0)^{-\frac{n+a+2}{2}} |y_\perp|^a \right] e^{-c\frac{|y_\perp|^2}{1-r_0}} \int_{|r-r_0|<\frac{1-r_0}{2}} e^{-c|x|^2 \frac{(r-r_0)^2}{1-r_0}}\, \dd r .
\end{align*}
Observe now that
\begin{equation}\label{stimaint}
\int_{|r-r_0|<\frac{1-r_0}{2}} e^{-c|x|^2 \frac{(r-r_0)^2}{1-r_0} }\, \dd r \lesssim \frac{\sqrt{1-r_0}}{|x|} \wedge (1-r_0).
\end{equation}
Therefore,
\[
K^a_{2,3,3} (x,y) \lesssim A^a(x,y) + B^a(x,y),
\]
where
\[
A^a(x,y)= (1-r_0)^{-\frac{n-a+1}{2}}|x|^{a-1} e^{-c\frac{|y_\perp|^2}{1-r_0}}  \left( 1 \wedge |x| \sqrt{1-r_0}\right)\mathbf{1}_{\left\{ \frac{1}{3}< r_0<1\right\}} (x,y)
\]
and 
\[
B^a(x,y) = (1-r_0)^{-\frac{n+a+2}{2}} |y_\perp|^a e^{-c\frac{|y_\perp|^2}{1-r_0}}\left( \frac{\sqrt{1-r_0}}{|x|} \wedge (1-r_0)\right).
\]
Observe that
\begin{align*}
B^a(x,y)& \lesssim (1-r_0)^{-\frac{n+a+2}{2}}|y_\perp|^a \,\frac{\sqrt{1-r_0}}{|x|}\left(\frac{\sqrt{1-r_0}}{|y_\perp|}\right)^a  e^{-\frac{c}{2}\frac{|y_\perp|^2}{1-r_0}} \\
& \lesssim (1-r_0)^{-\frac{n+1}{2}}\frac{1}{|x|} \,e^{-\frac{c}{2}\frac{|y_\perp|^2}{1-r_0}}.
\end{align*}
Now, if $|y_x-x|\geq |y_\perp|$, then $|y_x-x|\gtrsim (1+|x|)^{-1}$ since $(x,y)\in G$, and~\eqref{1-r0} implies
\begin{align*}
B^a(x,y) 
&\lesssim (1-r_0)^{-\frac{n+1}{2}}\frac{1}{|x|} \lesssim (1+|x|)^n.
\end{align*}
But if $|y_x-x|<|y_\perp|$, then $|y_\perp|\gtrsim (1+|x|)^{-1}$ since $(x,y)\in G$, and again by~\eqref{1-r0}
\begin{align*}
B^a(x,y)& \lesssim (1-r_0)^{-\frac{n+1}{2}}\left(\frac{1-r_0}{|y_\perp|^2}\right)^{\frac{n+1}{2}}\frac{1}{|x|}\lesssim (1+|x|)^n.
\end{align*}
On the other hand, by the definition of $B^a(x,y)$
\begin{align*}
B^a(x,y)& \lesssim(1-r_0)^{-\frac{n+ a+2}{2}}|y_\perp|^a \left(\frac{1-r_0}{|y_\perp|^2}\right)^{\frac{n+a}{2}} (1-r_0) \lesssim (|x|\sin\theta)^{-n}.
\end{align*}
Lemma~\ref{nucleoSalogni} applies, and the operator with kernel $e^{-|x|^2+|y|^2}B^a(x,y)$ is of weak type $(1,1)$. 

We now estimate $A^a$. Arguing as in the two cases for $B^a$, one can show that
\begin{align}\label{1xn}
A^a(x,y) \lesssim (1+|x|)^n.
\end{align}
Now, by means of~\eqref{1-r0} we rewrite $A^a$ as
\[
A^a(x,y) =  \frac{|x|^{\frac{n+a-1}{2}} }{|x-y_x|^{\frac{n-a+1}{2}}}\, e^{-c\frac{|y_\perp|^2|x|}{|x-y_x|}}  \left( 1 \wedge \sqrt{|x||x-y_x|}\right)\mathbf{1}_{\left\{ \frac{1}{3}< r_0<1\right\}} (x,y).
\]
If $a=0$, then
\begin{align}\label{A10}
A^0(x,y)
& \lesssim  \frac{|x|^{\frac{n-1}{2}} }{|x-y_x|^{\frac{n+1}{2}}}  \left( \frac{|x-y_x|}{|x| |y_\perp|^2}\right)^{n/2}  \sqrt{|x||x-y_x|} \lesssim (|x|\sin\theta)^{-n}.
\end{align} 
If $a=1$, then
\begin{align}\label{A11}
A^1(x,y) \lesssim  \frac{|x|^{\frac{n}{2}} }{|x-y_x|^{\frac{n}{2}}}  \left( \frac{|x-y_x|}{|x| |y_\perp|^2}\right)^{n/2} \lesssim (|x|\sin\theta)^{-n}.
\end{align} 
Because of~\eqref{1xn},~\eqref{A10} and~\eqref{A11}, the operators with kernels $e^{-|x|^2+|y|^2}A^0(x,y)$ and $e^{-|x|^2+|y|^2}A^1(x,y)$ are of weak type $(1,1)$ by Lemma~\ref{nucleoSalogni}.

If $a=2$, then 
\[
A^2(x,y) =  \frac{|x|^{\frac{n+1}{2}} }{|x-y_x|^{\frac{n-1}{2}}} e^{-c\frac{|y_\perp|^2|x|}{|x-y_x|}}  \left( 1 \wedge \sqrt{|x||x-y_x|}\right) \mathbf{1}_{\left\{ \frac{1}{3}< r_0<1\right\}} (x,y).
\] 
If $|x| |x-y_x| \leq 1$, then 
\begin{align*}
A^2(x,y) \lesssim \frac{|x|^{\frac{n+1}{2}} }{|x-y_x|^{\frac{n-1}{2}}} \left(\frac{|x-y_x|}{|y_\perp|^2|x|}\right)^{n/2}\sqrt{|x||x-y_x|}  \lesssim (|x|\sin\theta)^{-n}.
\end{align*}
By this and~\eqref{1xn}, the operator whose kernel is $e^{-|x|^2+|y|^2}A^2(x,y) \textbf{1}_{\{|x| |x-y_x| \leq 1\}}$ is of weak type $(1,1)$ because of Lemma~\ref{nucleoSalogni}.  The operator whose kernel is $e^{-|x|^2+|y|^2}A^2(x,y) \textbf{1}_{\{|x| |x-y_x| \geq 1\}}$ is also of weak type $(1,1)$, by Lemma~\ref{BLS}. This concludes the proof of Step 2 and that of the theorem.
\end{proof}

\section{Unboundedness of the Riesz transforms of order at least three}\label{Sec:3oltre}
In this section, we complete the proof of Theorem~\ref{main_theorem}. The proof of the following proposition is inspired by that of~\cite[Section 5]{GMST}, which we adapt to the current setting. 
\begin{proposition}\label{Prop_3}
If $|\alpha|\geq 3$, then $\Rs_\alpha$ is unbounded from $L^1(\gamma_{-1})$ to $L^{1,\infty}(\gamma_{-1})$.
\end{proposition}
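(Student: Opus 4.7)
We follow the strategy of \cite[Section 5]{GMST}, adapted to the inverse Gauss setting. The idea is to exhibit a single, fixed test function $f$ whose Riesz transform $\mathcal{R}_\alpha f$ decays too slowly at infinity to be compatible with the weak-type $(1,1)$ inequality as soon as $|\alpha|\geq 3$. Take $f = \mathbf{1}_{B(0,1)}$, so that $C_0 := \|f\|_{L^1(\gamma_{-1})}$ is a finite constant.

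\emph{Step 1 (Asymptotics of $K_{\mathcal{R}_\alpha}(x,0)$).} In formula~\eqref{kernelRalpha1} with $y=0$, perform the substitution $t=|x|^2/(1-r^2)$, then set $s=t-|x|^2$. For $|x|\to\infty$ the Gaussian factor $e^{-t}$ localizes $s$ to $s=O(1)$, so a Laplace-method expansion of the integrand gives
\[
K_{\mathcal{R}_\alpha}(x,0) = \frac{c_{n,\alpha}\,\Gamma(n/2)}{|x|^n}\,H_\alpha(x)\,(\log|x|)^{|\alpha|/2-1}\,e^{-|x|^2}\,\bigl(1+O(|x|^{-1})\bigr),
\]
with $c_{n,\alpha}\neq 0$. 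Since the leading monomial of $H_\alpha$ is $(2x)^\alpha$, one can choose a small open cone $\Gamma\subset\R^n$ about a unit vector $\omega$ with $\omega_j>0$ whenever $\alpha_j>0$, so that $|H_\alpha(x)|\geq c_\alpha |x|^{|\alpha|}$ on $\Gamma\cap\{|x|\geq 1\}$ with $c_\alpha>0$.

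\emph{Step 2 (Lower bound for $\mathcal{R}_\alpha f$ on $\Gamma$).} For $y\in B(0,1)$, the vector $x-ry$ differs from $x$ by $O(1/|x|)$ in the relevant saddle regime $r=O(1/|x|)$, and the phase $|x-ry|^2/(1-r^2)$ is $|x|^2/(1-r^2)+O(1)$. A direct check shows that Step~1 extends to
\[
K_{\mathcal{R}_\alpha}(x,y) = K_{\mathcal{R}_\alpha}(x,0)\,\bigl(1+O(|x|^{-1})\bigr)
\]
uniformly in $y\in B(0,1)$ as $|x|\to\infty$. Integrating over $B(0,1)$ and using the sign of $H_\alpha$ in $\Gamma$ yields
\[
|\mathcal{R}_\alpha f(x)|\geq c'_\alpha\,|x|^{|\alpha|-n}\,(\log|x|)^{|\alpha|/2-1}\,e^{-|x|^2},\qquad x\in\Gamma,\;|x|\geq R_0.
\]

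\emph{Step 3 (Contradicting the weak-type bound).} For $R\geq R_0+1$ put $E_R = \Gamma\cap\{R\leq|x|\leq R+1\}$ and set
\[
\lambda_R = \tfrac{c'_\alpha}{2}\,R^{|\alpha|-n}\,(\log R)^{|\alpha|/2-1}\,e^{-(R+1)^2}.
\]
Step~2 gives $|\mathcal{R}_\alpha f|>\lambda_R$ throughout $E_R$. An integration by parts in polar coordinates shows $\int_R^{R+1} r^{n-1}e^{r^2}\,dr \asymp R^{n-2}e^{R^2+2R}$, so $\gamma_{-1}(E_R)\asymp R^{n-2}e^{R^2+2R}$. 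Combining these,
\[
\lambda_R\,\gamma_{-1}\bigl(\{|\mathcal{R}_\alpha f|>\lambda_R\}\bigr)\geq \lambda_R\,\gamma_{-1}(E_R)\asymp R^{|\alpha|-2}(\log R)^{|\alpha|/2-1}.
\]
If $\mathcal{R}_\alpha$ were of weak type $(1,1)$ with constant $C$, the left-hand side would be bounded by $C\,C_0$ uniformly in $R$. For $|\alpha|\geq 3$ the right-hand side diverges as $R\to\infty$, giving the desired contradiction.

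\emph{Main obstacle.} The delicate point is Step~1: carrying out the Laplace expansion sharply enough to (i) identify a nonzero leading coefficient for every $\alpha$, and (ii) make the approximation $K_{\mathcal{R}_\alpha}(x,y)\approx K_{\mathcal{R}_\alpha}(x,0)$ uniform in $y\in B(0,1)$. The cone $\Gamma$ is introduced precisely to keep $|H_\alpha(x)|$ bounded below by $|x|^{|\alpha|}$, thereby guaranteeing that the leading saddle contribution does not cancel against lower-order terms.
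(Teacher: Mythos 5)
Your argument is correct in substance, but it takes a genuinely different route from the paper's. The paper sends the test function to infinity: it takes $f$ supported in $B(z,1)$ with $|z|\to\infty$ and bounds $(-1)^{|\alpha|}K_{\Rs_\alpha}(x,y)$ from below on a tube at radius $\approx\tfrac32|z|$ in the direction of $z$, using only the portion $r\in(1/4,3/4)$ of the subordination integral; this gives the lower bound $|z|^{|\alpha|-1}e^{-|x|^2+|y|^2}$ (no logarithm) and the divergence $\eta^{|\alpha|-2}$ of the weak-type quasi-norms of a family of normalized functions. You instead fix $f=\mathbf{1}_{B(0,1)}$ once and for all and let the level sets run off to infinity along a cone; the dominant contribution then comes from $r\approx1/|x|$, which is where your factor $(\log|x|)^{|\alpha|/2-1}$ originates. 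Your version yields the slightly stronger conclusion that a single function of $L^1(\gamma_{-1})$ is not mapped into $L^{1,\infty}(\gamma_{-1})$, and the logarithm quantifies the failure a bit more sharply; the paper's version needs only elementary pointwise bounds on the integrand and no asymptotic expansion at all.

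Two caveats on your write-up. First, the relative error in Step 1 is $O(1/\log|x|)$, not $O(|x|^{-1})$, because of the factor $(-\log r)^{|\alpha|/2-1}$. Second, and more importantly, the claim $K_{\Rs_\alpha}(x,y)=K_{\Rs_\alpha}(x,0)\bigl(1+O(|x|^{-1})\bigr)$ uniformly for $y\in B(0,1)$ is false: in the regime $r\approx u/|x|$ the cross term $2r\,x\cdot y/(1-r^2)$ in the exponent has size $\approx r|x|=O(1)$, so the ratio of the two kernels is bounded above and below by positive constants but does not tend to $1$. This does not damage the proof, only the way you phrase it: on your cone $\Gamma$ each factor $H_{\alpha_i}\bigl((x_i-ry_i)/\sqrt{1-r^2}\bigr)$ is positive, indeed $\gtrsim|x|^{\alpha_i}$, for every $r\in(0,1)$ and $y\in B(0,1)$ once $|x|$ is large, so the integrand in \eqref{kernelRalpha1} has constant sign and you may simply restrict the $r$-integral to $\bigl(1/(2|x|),1/|x|\bigr)$, where $|x-ry|^2/(1-r^2)\leq|x|^2+O(1)$. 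This gives the one-sided bound $(-1)^{|\alpha|}K_{\Rs_\alpha}(x,y)\gtrsim|x|^{|\alpha|-n}(\log|x|)^{|\alpha|/2-1}e^{-|x|^2}$ directly, with no need for a full Laplace expansion or for identifying a nonvanishing leading coefficient; the ``main obstacle'' you flag therefore disappears. With that replacement, Step 3 goes through as written.
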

\begin{proof} 
Let $\eta>0$ be large, and write $z$ for the point $(\eta, \dots, \eta)\in \R^n$. For every $x\in \R^n$, denote by $x_z$ the component of $x$ which is parallel to $z$, and by $x_\perp=x-x_z$ the component orthogonal to $z$.

Define the tube
\[
J(z)= \{x\in \R^n \colon |x_\perp|<1, \, \tfrac{4}{3}|z| <|x_z|<\tfrac{3}{2}|z|\}.
\]
We claim that if $\eta$ is sufficiently large, then for every  $r\in (0,1)$, $y\in B(z,1)$ and $x\in J(z)$ one has
\[
\frac{x_i -r y_i}{\sqrt{1-r^2}} \gtrsim |z|, \qquad i=1,\dots, n.
\]
Indeed, for these $r,y,x$,
\[
|(x-ry)_i - (x_z-rz)_i|\leq |x-ry - x_z +rz| \leq 2, \qquad \mbox{and} \qquad x_z -rz =c(r)z,
\]
where $c(r)>1/3$ for $r\in (0,1)$. The claim follows, and with $\eta$ large we also conclude that
\[
H_\alpha\left( \frac{x-r y}{\sqrt{1-r^2}}\right) \gtrsim |z|^{|\alpha|}.
\]
Moreover, if $x\in J(z)$, $y\in B(z,1)$ and $1/4<r<3/4$,
\begin{align*}
e^{-\frac{|rx-y|^2}{1-r^2}} 
&\geq e^{-2\frac{|rx-z|^2}{1-r^2} - 2 \frac{|z-y|^2}{1-r^2}}   \gtrsim e^{-c{|rx-z|^2}}  = e^{-c r^2|x_\perp|^2 - c{|rx_z-z|^2}} \gtrsim e^{-c \left|r|x_z|-|z|\right|^2}.
\end{align*}
Thus, if $\eta$ is sufficiently large, $x\in J(z)$ and $y\in B(z,1)$, then (recall~\eqref{kernelRalpha})
\begin{align}\label{estRalpha}
(-1)^{|\alpha|}K_{\Rs_\alpha}(x,y)
& \gtrsim |z|^{|\alpha|} e^{-|x|^2+ |y|^2} \int_{1/4}^{3/4} e^{- c{\left|r|x_z|-|z|\right|^2}}\, \dd r \gtrsim |z|^{|\alpha|-1}e^{-|x|^2+ |y|^2} .
\end{align}
Take now a function $f\geq 0$ supported in the ball $B(z,1)$ and such that $\|f\|_{L^1(\gamma_{-1})}=1$. By~\eqref{estRalpha}, for $x\in J(z)$ and $\eta$ sufficiently large, we have
\begin{align*}
|\Rs_\alpha f(x)| 
&\gtrsim e^{-|x|^2} |z|^{|\alpha|-1} \gtrsim e^{-\left(\frac{3}{2}|z|\right)^2} |z|^{|\alpha|-1}.
\end{align*}
Since
\[
\gamma_{-1}(J(z)) \gtrsim e^{\left(\frac{3}{2}|z|\right)^2} |z|^{-1},
\]
we conclude that
\begin{align*}
\sup_{s>0} \; s \, \gamma_{-1}\{x\colon |\Rs_\alpha f(x)|>s \} 
&\gtrsim e^{-\left(\frac{3}{2}|z|\right)^2} |z|^{|\alpha|-1} \gamma_{-1} (J(z))\gtrsim \eta^{|\alpha|-2},
\end{align*}
from which the proposition follows.
\end{proof}

\end{document}